
\documentclass[leqno]{article}
\usepackage{geometry}
\usepackage{graphicx}	
\usepackage[cp1251]{inputenc}
\usepackage[english]{babel}
\usepackage{mathtools}
\usepackage{amsfonts,amssymb,mathrsfs,amscd,amsmath,amsthm}
\usepackage{verbatim}

\usepackage{url}



\def\ig#1#2#3#4{\begin{figure}[!ht]\begin{center}%
\includegraphics[height=#2\textheight]{#1.eps}\caption{#4}\label{#3}%
\end{center}\end{figure}}




\def\thtext#1{
  \catcode`@=11
  \gdef\@thmcountersep{. #1}
  \catcode`@=12
}

\def\threst{
  \catcode`@=11
  \gdef\@thmcountersep{.}
  \catcode`@=12
}

\theoremstyle{plain}
\newtheorem{thm}{Theorem}[section]
\newtheorem{prop}[thm]{Proposition}
\newtheorem{cor}[thm]{Corollary}

\newtheorem{lem}[thm]{Lemma}

\theoremstyle{definition}

\newtheorem{examp}[thm]{Example}
\newtheorem{prb}[thm]{Problem}
\newtheorem{dfn}[thm]{Definition}
\newtheorem{rk}[thm]{Remark}


 \pagestyle{myheadings}

 \catcode`@=11
 \def\.{.\spacefactor\@m}
 \catcode`@=12


\def\N{{\mathbb N}}
\def\Q{{\mathbb Q}}
\def\R{\mathbb R}

\def\Z{{\mathbb Z}}

\def\a{\alpha}

\def\e{\varepsilon}
\def\dl{\delta}
\def\D{\Delta}
\def\g{\gamma}

\def\l{\lambda}
\def\L{\Lambda}
\def\r{\rho}

\def\t{\tau}
\def\v{\varphi}

\def\0{\emptyset}
\def\:{\colon}
\def\<{\langle}
\def\>{\rangle}

\def\d{\partial}

\def\rom#1{\emph{#1}}
\def\({\rom(}
\def\){\rom)}
\def\sm{\setminus}
\def\ss{\subset}

\def\sp{\supset}

\def\toGH{\xrightarrow{\GH}}

\def\x{\times}

\def\bX{{\bar X}}

\def\CC{\operatorname{CC}}

\def\diam{\operatorname{diam}}
\def\dil{\operatorname{dil}}
\def\dis{\operatorname{dis}}

\def\GH{\operatorname{\mathcal{G\!H}}}

\def\Iso{\operatorname{Iso}}

\def\VGH{\operatorname{\mathcal{VG\!H}}}

\def\cA{\mathcal{A}}

\def\cB{\mathcal{B}}
\def\cC{\mathcal{C}}

\def\cH{\mathcal{H}}

\def\cR{\mathcal{R}}

\begin{document}
\title{Gromov--Hausdorff class: its completeness and cloud geometry}
\author{S.~A.~Bogaty, A.~A.~Tuzhilin}
\maketitle

\begin{abstract}
The paper is devoted to the study of the Gromov--Hausdorff proper class, consisting of all metric spaces considered up to isometry. In this class, a generalized Gromov--Hausdorff pseudometric is introduced and the geometry of the resulting space is investigated. The first main result is a proof of the completeness of the space, i.e., that all fundamental sequences converge in it. Then we partition the space into maximal proper subclasses consisting of spaces at a finite distance from each other. We call such subclasses clouds. A multiplicative similarity group operates on clouds, multiplying all the distances of each metric space by some positive number. We present examples of similarity mappings transferring some clouds into another ones. We also show that if a cloud contains a space that remains at zero distance from itself under action of all similarities, then such a cloud contracted to this space. In the final part, we investigate subsets of the real line with respect to their behavior under various similarities.
\end{abstract}

\section*{Introduction}
\markright{\thesection.~Introduction}
This work is devoted to the study of the geometry of the Gromov--Hausdorff proper class~\cite{Mendelson}, which consists of all non-empty metric spaces considered up to isometry. This class is endowed with the famous Gromov--Hausdorff distance~\cite{Edwards, Gromov1981, Gromov1999}, which turns the class into a generalized pseudometric space (``generalize'' means that infinite distances may occur, and the prefix ``pseudo-'' that the distances between non-isometric spaces may vanish). Traditionally, this distance is studied on the Gromov--Hausdorff space, in which all metric spaces are compact~\cite{BurBurIva}. For non-compact spaces, a modified distance is usually introduced --- the pointed Gromov--Hausdorff distance~\cite{Herron}. In this case, one chooses some points in metric spaces and makes sure that when the spaces are compared with each other to define the distance, the points turn out to be ``close'' to each other. However, it is more traditional for non-compact spaces to consider the corresponding convergence without pre-setting the distance function~\cite{BurBurIva}.

The decision to ``break the tradition'' was implemented in~\cite{Borzov}, where the unmodified Gromov--Hausdorff distance is studied without restriction on compactness. In the present work, we continue this study. It was noted in~\cite{Borzov} that although a pseudometric can be defined on a proper class in the standard way, nevertheless, it is not possible to introduce the topology directly. Indeed, recall that in von Neumann--Bernays--G\"odel set theory, all objects are called classes that belong to one of two types: sets and proper classes. A class is called a set if it is a member of some other class. If the class is not a member of any other class, then it is called proper. Thus, it is not be possible to determine a topology on a proper class, because otherwise this class itself is an element of the topology and, therefore, must be a set. Nevertheless, for classes in which the cardinality filtering consists of sets, a ``topology'' can be introduced by assigning a certain topology to each element of the filtering, and requiring a natural matching of these topologies. This is how the concept of a topological class was introduced in~\cite{Borzov}. Also in~\cite{Borzov} it was shown that the Gromov--Hausdorff class is filtered by sets and, therefore, the Gromov--Hausdorff distance allows us to turn this class into a topological one.

The presence of such a ``topology'' allows one to define a continuous mapping from a topological space to a topological class\footnote{The presence of a pseudometric makes it possible to determine the continuity of a mapping from a topological space in the standard way, using spherical neighborhoods generated by the pseudometric. However, the neighborhoods can be proper subclasses, so it won't be possible to build a class from them. The topological class defined in~\cite{Borzov} allows working in more familiar terms.}, and with them continuous curves, their lengths, as well as the notion of intrinsic and strictly intrinsic generalized pseudometric. In~\cite {Borzov} it is proved that the Gromov--Hausdorff distance is intrinsic, that is, for metric spaces at a finite distance, it is equal to the infimum of the lengths of the curves connecting these spaces. The question of whether this distance is strictly intrinsic remains open. Recall that in the Gromov--Hausdorff space, the distance is a metric (it is always finite and positively definite) and, as shown in~\cite{IvaNikolaevaTuz}, this metric is strictly intrinsic. It is also well known~\cite{BurBurIva} that the Gromov--Hausdorff space is complete and separable.

What can we say about the geometry and topology of the Gromov--Hausdorff class? In this paper we show that the Gromov--Hausdorff class is complete. Then we split the Gromov--Hausdorff class into ``clouds'' --- maximal proper subclasses, each of which consists of spaces at a finite distance from each other. A cloud containing a one-point metric space includes exactly all bounded metric spaces. All other clouds are made up of unbounded spaces.

To formulate the following result, we introduce some notation. If $X$ is a metric space, then for a real $\l>0$, by $\l X$ we denote the metric space obtained from $X$ by multiplying all distances by $\l$. It is well known that for bounded metric spaces $X$ and $Y$, the Gromov--Hausdorff distance between $\l X$ and $\l Y$ equals to the distance between $X$ and $Y$ multiplying by $\l$. In addition, the distance from the one-point space $\D_1$ to a space $X$ equals to half the diameter of $X$. Thus, for $\l\to 0+$, all bounded spaces are contracted to $\D_1$. It is also clear that multiplication by $\l$ transfers the subclass of bounded spaces into itself. What about this operation for other clouds?

It turns out that everything is much more interesting here. We show by constructing a concrete example that for elements of some clouds, multiplication by $\l$ can lead to a jump to other clouds. In other words, for a positive $\l$, the distance between a metric space $X$ and $\l X$ may be infinite. Moreover, if for some $\l$ the space $\l X$ jumped out of a cloud containing $X$, then, when multiplied by another $\l$, the space $\l X$ can return to the original cloud. There are also such $X$ that never return: when multiplied by any positive $\l\ne 1$, the space $\l X$ is at the infinite distance from the space $X$. Naturally, the problem arises of finding out when the space jumps out, when it stays, when it returns, and in general how the corresponding families of $\l$ are arranged. In the final part of the paper, we will present a number of results on this topic using the example of metric spaces that are subsets of the real line.

Another interesting objects of the research are spaces that remain isometric to themselves when multiplied by any positive $\l$. We called such spaces $\dil$-invariant. Example of $\dil$-invariant spaces are a normed space $\R^n$, a positive orthant, a bouquet of $\dil$-invariant spaces, etc. An interesting problem is to describe both $\dil$-invariant spaces and the spaces at zero distance from them.

It turns out that if a cloud contains a $\dil$-invariant space, then the entire cloud is contracted to this space (in the case of $\R^n$ this was discovered by Ilya Belalov). For a cloud containing spaces jumping out of the cloud when multiplied by some $\l>0$, we show that all other spaces from this cloud behave in the same way. More precisely, the set of all $\l>0$, multiplication by which preserves a space in its cloud, is the same for all spaces of this cloud.

Thus, in this paper we prove a fundamental fact about the completeness of the Gromov--Hausdorff class and pose the problem of studying clouds for their ``stability'' w.r.t. multiplication by positive $\l$. Many interesting questions remain outside the scope of this work, for example, what could be the set of those $\l$ that preserve the cloud? It is easy to verify that these sets form multiplicative groups. Can any multiplicative subgroup be so implemented? Another question: which clouds are stable and which ones are not? For example, how can one describe the sequences of real numbers that remain in their clouds? It is clear that all arithmetic progressions are of this kind. What about increasing geometric progressions? We present a number of results describing the non-trivial behavior of these progressions.

The authors are grateful to the team of the seminar by A.O. Ivanov and A.A. Tuzhilin ``Theory of extremal networks'', and especially to its co-director Alexander Ivanov, and also to participant Ilya Belalov, for fruitful discussions. The study was carried out at Lomonosov Moscow State University, while A.A.Tuzhilin was supported by Russian Science Foundation, project 21-11-00355.

\section{Basic definitions and preliminary results}
\markright{\thesection.~Basic definitions and preliminary results}
As noted in the introduction, we will study the Gromov--Hausdorff distance on the proper class of all metric spaces. Let us recall the corresponding definitions.

In von Neumann-Bernays-G\"odel (NGB) set theory, all objects are called \emph{classes}. Classes are of two types: \emph{sets}, which are defined as classes that are elements of other classes, and \emph{proper classes}, which are not elemets of any other classes. Many standard operations are defined on the classes, for example, Cartesian product, mappings, etc. In particular, a distance function can be specified on each class. We will use the following terminology:
\begin{itemize}
\item a \emph{distance function\/} or, in short, a \emph{distance\/} on a class $\cA$ is an arbitrary mapping $\r\:\cA\x\cA\to[0,\infty]$, for which always $\r(x,x)=0$ and $\r(x,y)=\r(y,x)$ (symmetry) hold;
\item if the distance satisfies the triangle inequality, i.e., if $\r(x,z)\le\r(x,y)+\r(y,z)$ is always satisfied, then $\r$ is called \emph{generalized pseudometric\/} (the word "generalized'' corresponds to the possibility of taking the value $\infty$);
\item if the positive definiteness condition is additionally satisfied for a generalized pseudometric, i.e., if $\r(x,y)=0$ always implies $x=y$, then we call such $\r$ \emph{generalized metric\/};
\item finally, if $\r$ does not take the value $\infty$, then in the above definitions we will omit the word `generalized", and sometimes, to emphasize the absence of $\infty$, we call this distance \emph{finite}.
\end{itemize}

As is customary in metric geometry, instead of $\r(x,y)$ we write $|xy|$ as a rule.

As we already noted in the introduction, it is impossible to define a topology on a proper class, since otherwise the proper class will be an element of this topology and, therefore, will be a set. Nevertheless, we have proposed a modified construction, which is also called topology, but already defined on proper classes.

Let $\cA$ be an arbitrary class. Since all its elements are sets, the cardinality is defined for each of them. For each cardinal number $n$, consider a subclass $\cA_n$ consisting of all elements whose cardinalities do not exceed $n$. Then there is a natural filtering: $\cA_m\ss\cA_n$ for $m\le n$. This filtering will be called \emph{filtering by sets\/} if all $\cA_n$ are sets. For a class that is filtered by sets, we define \emph{topology\/} as a mapping $\t\:n\mapsto\t_n$ that maps each cardinal number to a topology on $\cA_n$ so that these topologies are \emph{consistent}, i.e., if $m<n$, then the topology $\t_m$ is induced from the topology $\t_n$. A class filtered by sets with the topology defined on it will be called a \emph{topological class}.

Note that if we take a set of cardinality $n$ as a class $\cA$ and define a topology $\t\:m\to\cA_m$ as described above, then the topology $\t_n$ on $\cA_n=\cA$ induces topologies on all $\cA_m$, $m<n$. On all $\cA_m$, $m>n$, coinciding in this case with $\cA$, the topology $\t_m$ will coincide with $\t_n$. Thus, in this case we actually have a standard topology on the set $\cA$ (all other topologies are obtained in the standard way from it), so our construction can be considered as a generalization of the concept of topology to proper classes.

Further, if a generalized pseudometric (or metric) is given on a class, then it turns this class into a topological one in a standard way. The corresponding topology will be called \emph{pseudometric\/} (\emph{metric\/}).

It follows from the NGB axioms that if $X$ is a set, $\cA$ is a class, and $f\:X\to\cA$ is a mapping, then the image $f(X)$ is a set that belongs to some $\cA_n$. Thus, if we take a topological space as $X$ and a topological class as $\cA$, then the \emph{continuity\/} of the mapping $f$ is naturally defined, namely, the mapping $f\:X\to\cA_n$ \emph{is continuous\/} if its restriction $X\mapsto\cA_n$ is continuous for some and, therefore, for any cardinal $n$ satisfying the condition $f(X)\ss\cA_n$.

The latter allows us to introduce in the standard way the definition of a continuous curve $\g$, and with it, in the case when the topology is generated by a generalized pseudometric, the concepts of the length $|\g|$ of a continuous curve $\g$, intrinsic pseudometric, as well as strictly intrinsic pseudometric and geodesic class. Namely, a generalized pseudometric on a set-filtered class $\cA$ will be called \emph{intrinsic\/} if for every $X,Y\in\cA$, $|XY|<\infty$, and for any $\e>0$ there exists a continuous curve $\g$ connecting $X$ and $Y$ such that $|\g|<|XY|+\e$.

Let us now recall the definitions we need from metric geometry, namely, the Hausdorff and Gromov--Hausdorff distances.

\subsection{Hausdorff distance}
Let $X$ be an arbitrary metric space, $x\in X$, $r>0$ and $s\ge 0$ be real numbers. By $U_r(x)$ and $B_s(x)$ we denote, respectively, the \emph{open\/} and \emph{closed balls\/} centered at the point $x$ and with the radii $r$ and $s$. If $A$ and $B$ are non-empty subsets of $X$, then we put $|XA|=|AX|=\inf\bigl\{|xa|:a\in A\bigr\}$ and $|AB|=|BA|=\inf\bigl\{|ab|:a\in A,\,b\in B\bigr\}$. Next, we define the \emph{open $r$-neighborhood of the set $A$} by setting $U_r(A)=\bigl\{x\in X:|xA|<r\bigr\}$. Finally, the \emph{Hausdorff distance\/} between $A$ and $B$ is the value
$$
d_H(A,B)=\inf\bigl\{r:A\ss U_r(B)\ \&\ U_r(A)\sp B\bigr\}.
$$
The Hausdorff distance is a generalized pseudometric: it can be infinite, as in the case of the straight line $\R$ and any of its points, and also equal to zero between different subsets, for example, between the segment $[0,1]$ and the interval $(0,1)$. Nevertheless, the following classical result holds.

\begin{thm}[\cite{BurBurIva}]
On the set $\cH(X)$ consisting of all non-empty bounded closed subsets of the metric space $X$, the Hausdorff distance is a metric. Moreover, $X$ and $\cH(X)$ simultaneously possess or not the following properties: completeness, total boundedness, compactness, bounded compactness.
\end{thm}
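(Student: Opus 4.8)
The plan is to check the metric axioms directly, and then to deduce each of the four ``simultaneous'' properties from the combination of an explicit limit/net construction in $\cH(X)$ with the isometric embedding $\iota\:X\to\cH(X)$, $x\mapsto\{x\}$. Symmetry and $d_H(A,A)=0$ are immediate. If $A,B\in\cH(X)$, both lie in a common ball $B_s(x_0)$, whence $|aB|\le 2s$ for every $a\in A$ and symmetrically, so $d_H(A,B)\le 2s<\infty$. The triangle inequality follows since $A\ss U_r(B)$ and $B\ss U_{r'}(C)$ give $A\ss U_{r+r'}(C)$, and symmetrically for the other neighbourhood condition; taking infima yields $d_H(A,C)\le d_H(A,B)+d_H(B,C)$. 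Finally $d_H(A,B)=0$ forces $A\ss\bigcap_{r>0}U_r(B)=\overline B=B$ and likewise $B\ss A$, so $A=B$ since $A,B$ are closed. Moreover $\iota$ is an isometric embedding ($d_H(\{x\},\{y\})=|xy|$), and $\iota(X)$ is closed in $\cH(X)$: if $d_H(\{x_n\},A)\to0$ then $\diam A\le 2\,d_H(\{x_n\},A)\to0$, so the limit $A$ is a singleton. As completeness, total boundedness, compactness and bounded compactness all pass to closed subspaces, $\cH(X)$ having any of them forces $X\cong\iota(X)$ to have it; this disposes of all the ``only if'' implications at once.

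For the converses, assume first $X$ complete and let $(A_n)$ be Cauchy in $\cH(X)$. Put $A=\bigcap_{n\ge1}\overline{\bigcup_{k\ge n}A_k}$, equivalently the set of all limits of convergent sequences $x_{n_j}\in A_{n_j}$ with $n_j\to\infty$. One verifies that $A$ is closed and bounded, that $A\ne\0$ (pass to a fast subsequence, build a Cauchy sequence by picking $x_{n_{j+1}}\in A_{n_{j+1}}$ near $x_{n_j}$, and use completeness of $X$), and that $d_H(A_n,A)\to0$ (the Cauchy condition gives $A\ss U_\e(A_n)$ and $A_n\ss U_\e(A)$ for all large $n$); hence $\cH(X)$ is complete. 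Assume now $X$ totally bounded (so also bounded); for $\e>0$ take a finite $\e$-net $S\ss X$ and, for $A\in\cH(X)$, put $A'=\{s\in S:|sA|<\e\}$. Then $A'\ne\0$, $d_H(A,A')\le\e$, and there are only finitely many such subsets of $S$, so $\cH(X)$ is totally bounded. Consequently $X$ compact $\iff$ $X$ complete and totally bounded $\iff$ $\cH(X)$ complete and totally bounded $\iff$ $\cH(X)$ compact.

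It remains to treat bounded compactness, which means that every bounded closed set is compact, equivalently that all closed balls are compact. If $\cH(X)$ has this property then for $x\in X$, $r>0$ the set $\iota\bigl(B_r(x)\bigr)$ is bounded and closed in $\cH(X)$, hence compact, hence $B_r(x)$ is compact. Conversely assume $X$ boundedly compact and let $\cC\ss\cH(X)$ be bounded and closed. Fix $A_0\in\cC$ and $R<\infty$ with $d_H(A,A_0)\le R$ for all $A\in\cC$; then every such $A$ lies in $K:=\overline{U_{R+1}(A_0)}$, which is bounded and closed in $X$, hence compact. For a closed $K\ss X$ one has $\cH(K)=\{A\in\cH(X):A\ss K\}$ with the same Hausdorff distance, so $\cC\ss\cH(K)$, and $\cH(K)$ is compact by the previous paragraph applied to the compact space $K$. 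Being closed in $\cH(X)$, $\cC$ is closed in $\cH(K)$, hence compact; thus $\cH(X)$ is boundedly compact.

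The only step requiring genuine work is the completeness of $\cH(X)$: the point needing care is showing that the candidate set $A=\bigcap_{n}\overline{\bigcup_{k\ge n}A_k}$ is non-empty and that $d_H(A_n,A)\to0$. All the boundedness and closedness claims for $A$ and $K$, the identification of $\cH(K)$ with a subspace of $\cH(X)$, and the net estimate for total boundedness are straightforward.
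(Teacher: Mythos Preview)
The paper does not prove this theorem at all: it is quoted as a known result from~\cite{BurBurIva} and is used only as background for the Hausdorff distance before passing to the Gromov--Hausdorff distance. So there is no ``paper's own proof'' to compare against.

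Your argument is correct and is essentially the standard one found in~\cite{BurBurIva}. The verification of the metric axioms, the isometric embedding $x\mapsto\{x\}$ with closed image to transfer properties from $\cH(X)$ back to $X$, the limit construction $A=\bigcap_{n}\overline{\bigcup_{k\ge n}A_k}$ for completeness, the finite-subset net for total boundedness, and the reduction of bounded compactness to the compact case via $\cH(K)$ are all the expected steps. Two minor remarks: in the bounded compactness paragraph you re-prove the ``only if'' direction even though you already covered it via the closed-subspace observation; and in the completeness step it is worth noting explicitly that $\cH(K)$ is closed in $\cH(X)$ (or, as you do implicitly, that a set closed in $\cH(X)$ and contained in $\cH(K)$ is automatically closed in $\cH(K)$). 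Neither affects correctness.
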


\subsection{Gromov--Hausdorff distance}
We denote by $\VGH$ the proper class consisting of all non-empty metric spaces. On this class, we define a distance function called the \emph{Gromov--Hausdorff distance\/}:
$$
d_{GH}(X,Y)=\inf\bigl\{d_H(X',Y'):X',Y'\ss Z\in\VGH,\,X'\approx X,\,Y'\approx Y\bigr\},
$$
where for the metric spaces $U$ and $V$ the expression $U\approx V$ means that these spaces are isometric.

The following theorem is well known.

\begin{thm}[\cite{BurBurIva}]
The Gromov--Hausdorff distance is a generalized pseudometric vanishing on each pair of isometric spaces.
\end{thm}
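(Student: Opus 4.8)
The plan is to verify in turn the three properties packed into the statement: that $d_{GH}$ is a well-defined distance function $\VGH\x\VGH\to[0,\infty]$, that it vanishes on isometric pairs, and that it satisfies the triangle inequality. For well-definedness the only delicate point is that the infimum should range over a set of extended reals, not over a proper class; this holds because for any admissible triple $(Z,X',Y')$ the value $d_H(X',Y')$ is unchanged when $Z$ is replaced by its subspace $X'\cup Y'$, whose cardinality is at most that of $X$ plus that of $Y$, so the family $\{d_H(X',Y')\}$ is a genuine subset of $[0,\infty]$. It is nonempty: fixing basepoints $x_0\in X$, $y_0\in Y$, the disjoint union $X\sqcup Y$ with $|xy|:=|xx_0|_X+1+|y_0y|_Y$ for $x\in X$, $y\in Y$ is a finite metric space containing isometric copies of $X$ and $Y$, as a routine check of the triangle inequality shows. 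Symmetry of $d_{GH}$ is inherited from that of $d_H$ and of the defining condition.

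Vanishing on isometric pairs is immediate: if $f\:X\to Y$ is an isometry, take $Z=Y$, $X'=f(X)=Y$ and $Y'=Y$; then $X'\approx X$, $Y'\approx Y$, and $d_H(X',Y')=d_H(Y,Y)=0$, whence $d_{GH}(X,Y)=0$. In particular $d_{GH}(X,X)=0$ (case $f=\id$), so together with the previous paragraph $d_{GH}$ is a distance function.

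For the triangle inequality $d_{GH}(X,Z)\le d_{GH}(X,Y)+d_{GH}(Y,Z)$ I may assume both right-hand terms finite. Fix $\e>0$ and choose, by the definition of $d_{GH}$, finite metric spaces $W_1\sp X'\cup Y'$ with $X'\approx X$, $Y'\approx Y$, $d_H(X',Y')<d_{GH}(X,Y)+\e$, and $W_2\sp Y''\cup Z''$ with $Y''\approx Y$, $Z''\approx Z$, $d_H(Y'',Z'')<d_{GH}(Y,Z)+\e$. Relabelling $W_2$ by an isometry $Y''\to Y'$ and renaming the remaining points, we may assume $Y'=Y''=:Y_0$ and $W_1\cap W_2=Y_0$. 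Glue $W_1$ and $W_2$ along $Y_0$: on $W:=W_1\cup W_2$ keep the original distances inside each $W_i$ and put $|w_1w_2|:=\inf_{y\in Y_0}\bigl(|w_1y|_{W_1}+|yw_2|_{W_2}\bigr)$ for $w_1\in W_1\sm Y_0$, $w_2\in W_2\sm Y_0$. One verifies that this is a generalized pseudometric whose restriction to each $W_i$ is the original metric (the only nontrivial case of the triangle inequality, for a triple split across $Y_0$, reduces to the triangle inequalities in $W_1$ and $W_2$ plus the fact that the two metrics agree on $Y_0$); passing to the metric identification $\widehat W$, which is isometric hence injective on each $W_i$, we obtain an honest metric space containing isometric copies of $X$, $Y$, $Z$, and the Hausdorff distances among these copies coincide with those computed in $W_1$ and $W_2$. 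Then the triangle inequality for $d_H$ inside $\widehat W$ gives
$$
d_{GH}(X,Z)\le d_H(X',Z'')\le d_H(X',Y_0)+d_H(Y_0,Z'')<d_{GH}(X,Y)+d_{GH}(Y,Z)+2\e ,
$$
and letting $\e\to 0$ completes the argument.

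The main obstacle is the gluing construction: one has to confirm that the displayed formula extends the two metrics to a generalized pseudometric on $W$ — the triangle inequality for triples split across $Y_0$ being the one genuine computation — and to note that, its restriction to each $W_i$ being the original metric, the metric identification stays injective on $W_1$ and $W_2$, which is what keeps the isometric copies of $X,Y,Z$ and the Hausdorff distances among them intact. Everything else is bookkeeping with $\e$ and the elementary inclusion $U_r(U_s(A))\ss U_{r+s}(A)$, from which the triangle inequality for the Hausdorff distance follows at once.
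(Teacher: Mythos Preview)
The paper does not supply its own proof of this statement; it merely cites \cite{BurBurIva}. Your argument is correct and is essentially the standard one found there: vanishing on isometric pairs by embedding both spaces in $Z=Y$, and the triangle inequality via the gluing of two ambient spaces along a common isometric copy of $Y$, followed by the triangle inequality for $d_H$ in the glued space. Your opening remark on well-definedness---that the family of values $d_H(X',Y')$ forms a genuine set---is a reasonable addition given the foundational discussion the paper conducts about proper classes, though strictly speaking the cardinality bound is not needed: in NBG any subclass of the set $[0,\infty]$ is already a set, so the infimum makes sense regardless of whether the indexing class of triples $(Z,X',Y')$ is proper.
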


This theorem allows us to investigate the Gromov--Hausdorff distance on a ``less wild'' proper class $\GH$ consisting of representatives of isometry classes of all metric spaces, one from each isometry class. This is similar to considering all sets up to bijection, that is, to consider the proper class of all cardinal numbers.

\begin{thm}[\cite{Borzov}]\label{thm:classGH}
The proper class $\GH$ is filtered by sets, so that the Gromov--Hausdorff distance turns it into a topological class. Moreover, the Gromov--Hausdorff distance is an intrinsic generalized pseudometric on $\GH$.
\end{thm}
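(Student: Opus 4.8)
The plan is to prove the three assertions of the theorem separately: that $\GH$ is filtered by sets, that $d_{GH}$ then makes it a topological class, and that $d_{GH}$ is intrinsic. For the first, fix a cardinal $n$ and a set $S$ with $|S|=n$. Every metric space of cardinality $\le n$ is isometric to some $(A,\r)$ with $A\ss S$ and $\r$ a metric on $A$, and the collection $\cM_n$ of all such pairs is a set (there is a set of all metrics on subsets of the fixed set $S$). Hence the map assigning to each $(A,\r)\in\cM_n$ the unique member of $\GH$ isometric to it is a surjection of a set onto $\GH_n$, so $\GH_n$ is a set by the replacement axiom; the inclusions $\GH_m\ss\GH_n$ for $m\le n$ are automatic. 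For the second assertion, by the preceding theorem $d_{GH}$ is a generalized pseudometric, so once $\GH$ is filtered by sets we simply put on each $\GH_n$ its pseudometric topology; consistency holds because the pseudometric on $\GH_m$ is the restriction of the one on $\GH_n$ and the subspace topology of a pseudometric topology is the pseudometric topology of the restricted pseudometric. This part is purely formal.

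The substantive part is intrinsicness. Given $X,Y\in\GH$ with $D:=d_{GH}(X,Y)<\infty$ and $\e>0$, I would use the standard description $d_{GH}(X,Y)=\tfrac12\inf_R\dis R$ over correspondences $R\ss X\x Y$ (see~\cite{BurBurIva}) to choose a correspondence $R$ with $\dis R<2D+\e$, and then build an explicit interpolating curve. On $R$, for $t\in[0,1]$, put $d_t\bigl((x,y),(x',y')\bigr)=(1-t)\,|xx'|_X+t\,|yy'|_Y$; this is a pseudometric, the triangle inequality being inherited from $X$ and $Y$. Let $R_t$ be the metric space obtained from $(R,d_t)$ by gluing points at zero distance, and let $\g(t)\in\GH$ be the isometry class of $R_t$. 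Since $R$ projects onto $X$, the projection $R\to X$ descends to an isometry $R_0\to X$, so $\g(0)=X$; similarly $\g(1)=Y$. For $s\le t$ the diagonal correspondence on $R$ has distortion $\sup_R|d_t-d_s|=(t-s)\sup_R\bigl|\,|xx'|_X-|yy'|_Y\,\bigr|\le(t-s)\dis R$ with respect to $d_s$ and $d_t$, hence $d_{GH}\bigl(\g(s),\g(t)\bigr)\le\tfrac12(t-s)\,\dis R$. Thus $\g$ is $\tfrac12\dis R$-Lipschitz; since every $R_t$ has cardinality $\le|X|\cdot|Y|$, the curve $\g$ stays inside one $\GH_n$, where continuity of $\g$ is precisely its pseudometric continuity, which the Lipschitz estimate provides. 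Finally $|\g|\le\tfrac12\dis R<D+\e$, so $d_{GH}$ is intrinsic.

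The main difficulty is not any single hard estimate but keeping the two layers straight. On the set-theoretic side one must check that ``one representative per isometry class'' really yields a set — this is exactly where the replacement axiom is used — and that the $\GH_n$ are the correct, mutually consistent pieces of the filtration. On the geometric side, the point to verify is that the abstract notion of a continuous curve in a topological class agrees with ordinary Lipschitz continuity here, which works precisely because the interpolation $\g$ never leaves a fixed $\GH_n$; everything else (that $d_t$ is a pseudometric, the bound $d_{GH}\le\tfrac12\dis$ from the diagonal correspondence, and the identification of $R_0$ with $X$ via the projection) is routine once the correspondence description of $d_{GH}$ is available. This is, in essence, the argument carried out in~\cite{Borzov}.
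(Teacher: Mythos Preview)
The paper does not give its own proof of this theorem; it simply states the result with a citation to~\cite{Borzov}. Your proposal is a correct and complete sketch of the standard argument---the set-theoretic filtration via replacement, the formal passage to a topological class, and the linear interpolation $d_t=(1-t)d_X+t\,d_Y$ on a near-optimal correspondence to produce a short curve---and you yourself correctly identify it as the argument of~\cite{Borzov}. There is nothing to compare against in the present paper, and no gap in what you wrote.
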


The topological class $\GH$ described in Theorem~\ref{thm:classGH} will be called the \emph{Gromov--Hausdorff class}. An important subclass in $\GH$ is the proper class which consists of all bounded metric spaces, i.e.,  spaces $X$ with finite diameter $\diam X$. This class will be denoted by $\cB$.

Let us describe a few simplest properties of the class $\GH$. By $\D_1\in\GH$ we denote the one-point metric space. Also, for a space $X\in\GH$ and a real number $\l>0$, we denote by $\l X$ the metric space that is obtained from $X$ by multiplying its all distances by $\l$. If $\l=0$ and $\diam X<\infty$, we put $\l X=\D_1$. The transformation $H_\l\:\GH\to\GH$, $H_\l\:X\mapsto\l X$ for $\l>0$, we call \emph{similarity with coefficient $\l$}. 

\begin{thm}[\cite{BurBurIva}]\label{thm:estim}
For any $X,Y\in\GH$,
\begin{enumerate}
\item\label{htm:estim:1} $2d_{GH}(\D_1,X)=\dim X$\rom;
\item\label{tm:estima:2} $2d_{GH}(X,Y)\le\max\{\dim X,\dim Y\}$\rom;
\item\label{tm:estima:3} if the diameter of $X$ or $Y$ is finite, then
$\bigl|\dim X-\dim Y\big|\le2d_{GH}(X,Y)$.
\item\label{thm:estim:4} if the diameter of $X$ is finite, then for any $\l\ge0$ and $\mu\ge0$ we have $d_{GH}(\l X,\mu X)=\frac12|\l-\mu|\diam X$, whence it immediately follows that the curve $\g(t):=t\,X$ is shortest between any of its points, and the length of such a segment of the curve is equal to the distance between its ends\rom;
\item\label{thm:estim:5} for any $\l\ge0$, we have $d_{GH}(\l X,\l Y)=\l\,d_{GH}(X,Y)$. Note that the only space $X\in\cB$ for which $\l X$ is isometric to $X$ for all $\l>0$, is the one-point space $\D_1$. Thus, the similarity transformation $H_\l$ is a homothety of the space $\cB$ centered at the one-point metric space.
\end{enumerate}
\end{thm}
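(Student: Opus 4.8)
The plan is to prove the five items by hand: the inequalities ``$\le$'' from explicit gluings of metric spaces, the inequalities ``$\ge$'' from triangle-inequality estimates carried out inside an arbitrary realization, and item~(5) from a scaling symmetry of the ambient space. Write $D_X:=\diam X$. For item~(1), glue $Z=X\sqcup\{p\}$ with $|xp|_Z=D_X/2$ for every $x\in X$ (nothing to prove if $X=\D_1$); the triangle inequalities reduce to $D_X/2+D_X/2\ge|xy|$ and $|xy|+D_X/2\ge D_X/2$, so $Z\in\VGH$ and $d_H\bigl(X,\{p\}\bigr)=D_X/2$, whence $2d_{GH}(\D_1,X)\le D_X$; conversely, in any realization $X'\sqcup\{p\}\hookrightarrow Z$ and any $a,b\in X'$ we have $2\sup_{x\in X'}|xp|\ge|ap|+|pb|\ge|ab|$, so $2\sup_x|xp|\ge D_X$, and taking the infimum over realizations gives $2d_{GH}(\D_1,X)\ge D_X$ (including $D_X=\infty$). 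For item~(2), vacuous unless both diameters are finite, glue $Z=X\sqcup Y$ with all cross-distances equal to $\tfrac12\max\{D_X,D_Y\}$; this is a metric and yields $d_H(X,Y)\le\tfrac12\max\{D_X,D_Y\}$ (equivalently, the obvious correspondence between $X$ and $Y$ has distortion $\le\max\{\diam X,\diam Y\}$). For item~(3), a bounded space and an unbounded one lie at infinite distance (a finite-radius neighbourhood of a bounded set is bounded), so both sides are $\infty$ in that case; assuming both bounded, in a realization with $r=d_H(X',Y')$ pick $a,b\in X'$ with $|ab|>D_X-\e$ and $a',b'\in Y'$ within $r+\e$ of $a,b$, so $D_X-\e<|ab|\le|a'b'|+2(r+\e)\le D_Y+2r+2\e$; letting $\e\to0$, symmetrizing, and passing to the infimum over realizations yields $|D_X-D_Y|\le2d_{GH}(X,Y)$.

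Item~(4) is the substantial one. Its lower bound is item~(3) applied to $\l X,\mu X$, using $\diam(\l X)=\l D_X$. For the upper bound assume $\l>\mu\ge0$ (the case $\l=\mu$ is trivial, and $\mu=0$ is item~(1)) and build $Z$ from two disjoint copies of the set underlying $X$, carrying the rescaled metrics $\mu|xx'|$ and $\l|xx'|$ respectively, with cross-distance $|x^{(\mu)}y^{(\l)}|_Z:=\tfrac{\l-\mu}{2}D_X+\mu|xy|$. Checking that $Z$ is a metric space is a short case analysis of the triangle inequality; the only case needing more than one line is the one with both endpoints in the $\l$-copy, $\l|yy'|\le(\l-\mu)D_X+\mu\bigl(|xy|+|xy'|\bigr)$, which follows from $|xy|+|xy'|\ge|yy'|$ together with $|yy'|\le D_X$. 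In this $Z$ each point of the $\mu$-copy is at distance exactly $\tfrac{\l-\mu}{2}D_X$ from the $\l$-copy (attained at the ``same'' point) and symmetrically, so $d_H\le\tfrac12|\l-\mu|D_X$; combined with the lower bound, this is the asserted equality. The geodesic assertion then follows immediately: $t\mapsto tX$ is, up to the constant factor $\tfrac12D_X$, an isometric embedding of a real interval into $\GH$, hence shortest between any two of its points, with the length of each arc equal to the distance between the endpoints.

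For item~(5) with $\l>0$: rescaling the ambient metric of any realization of $X,Y$ by $\l$ produces a realization of $\l X,\l Y$ whose Hausdorff distance is multiplied by $\l$, so $d_{GH}(\l X,\l Y)\le\l\,d_{GH}(X,Y)$; applying this with $1/\l$ in place of $\l$ and $(\l X,\l Y)$ in place of $(X,Y)$ gives the reverse inequality, hence equality, also when the distance is infinite; for $\l=0$ both spaces are bounded and the identity reads $0=0$. If moreover $X\in\cB$ and $\l X\approx X$ for all $\l>0$, then $\l D_X=\diam(\l X)=D_X$ forces $D_X=0$, i.e.\ $X=\D_1$; this, together with $d_{GH}(\D_1,\l X)=\l\,d_{GH}(\D_1,X)$ from item~(1) and with item~(5) itself, is exactly the statement that $H_\l$ is a homothety of $\cB$ centred at $\D_1$. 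The one genuinely non-formal point is the upper bound in item~(4) --- producing a single ambient metric space realizing $\l X$ and $\mu X$ at Hausdorff distance exactly $\tfrac12|\l-\mu|\diam X$, and checking that the two-copies gluing really is a metric --- so that is where I would concentrate care; everything else reduces either to an elementary estimate inside an arbitrary realization or to the trivial rescaling of the ambient space.
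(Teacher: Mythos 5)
Your proof is correct. Note that the paper itself offers no argument for this theorem --- it is quoted with a citation to Burago--Burago--Ivanov (and the occurrences of $\dim$ in the statement are typos for $\diam$, as you rightly read them) --- so there is nothing in the source to compare against; your proof is the standard one: explicit one-point and two-copy gluings for the upper bounds, triangle-inequality estimates inside an arbitrary realization for the lower bounds, and ambient rescaling for item~(5). The only place requiring real verification, the metric axioms for the two-copy gluing with cross-distance $\tfrac{\l-\mu}{2}\diam X+\mu|xy|$ in item~(4), checks out exactly as you describe (with the harmless caveat that for $\mu=0$ the $\mu$-copy degenerates to a pseudometric and one quotients, or simply invokes item~(1)).
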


Figure~\ref{fig:gh-space-eng} shows the scheme of the Gromov--Hausdorff class.

\ig{gh-space-eng}{0.36}{fig:gh-space-eng}{Gromov--Hausdorff class, general view.}

Consider the relation $\sim_1$ on $\GH$: we say that $X,Y\in\GH$ are in relation $\sim_1$ if and only if $d_{GH}(X,Y)=0$. It is easy to see that $\sim_1$ is an equivalence. The equivalence classes of this relation will be called \emph{clouds}. It is clear that the Gromov--Hausdorff distance between points of the same cloud is finite, and between points of different clouds is infinite. Thus, the restriction of the Gromov--Hausdorff distance to the cloud is a (finite) pseudometric.

\section{Completeness of the Gromov--Hausdorff class}
\markright{\thesection.~Completeness of the Gromov--Hausdorff class}
The purpose of this subsection is to prove a fundamental property of the Gromov--Hausdorff class.

\begin{thm}\label{thm:GHcomplete}
The Gromov--Hausdorff class is complete. In particular, all clouds are complete.
\end{thm}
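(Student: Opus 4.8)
The plan is to adapt the classical proof that the Gromov--Hausdorff space of compact spaces is complete, while taking care of the features specific to our setting: infinite distances, unbounded and non-complete spaces, and the absence of any separability or total-boundedness hypothesis. It suffices to prove that every cloud, with the finite pseudometric induced by $d_{GH}$, is complete: indeed, a fundamental sequence $\{X_n\}$ in $\GH$ has a tail lying in a single cloud (pick $N$ with $d_{GH}(X_m,X_n)<1$ for $m,n\ge N$), and discarding finitely many terms affects neither fundamentality nor convergence; so convergence of all clouds yields both assertions of the theorem. Having fixed such a fundamental sequence, I would also pass to a subsequence with $d_{GH}(X_n,X_{n+1})<\e_n$ and $\sum_n\e_n<\infty$ (say $\e_n=2^{-n}$), since a fundamental sequence converges as soon as one of its subsequences does.

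\textbf{A common ambient space.} For each $n$, the definition of $d_{GH}$ gives a metric space $Z_n$ containing isometric copies of $X_n$ and of $X_{n+1}$ with Hausdorff distance $<\e_n$ between them. I would glue $Z_1,Z_2,\dots$ successively along the shared copies of $X_2,X_3,\dots$ (the standard chain amalgam): the result $W$ is a genuine metric space --- the gluing collapses no distances because each $X_{n+1}$ sits isometrically in both adjacent $Z$'s --- it contains isometric copies of all $X_n$, henceforth viewed as subsets $X_n\ss W$, and $d_H(X_n,X_{n+1})<\e_n$ in $W$ for every $n$. All distances in $W$ are finite even when the $X_n$ are unbounded: every $x\in X_j$ lies within $\e_j+\dots+\e_{k-1}$ of $X_k$ (chase the successive $\e$-neighbourhoods), and $X_k$ carries an honest finite metric. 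Let $\widehat W$ be the completion of $W$.

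\textbf{The limit space.} Put $r_n=\sum_{k\ge n}\e_k$ and
\[
X_\infty:=\bigcap_{N\in\N}\ \overline{\bigcup_{n\ge N}X_n}\ \ss\ \widehat W,
\]
with closures taken in $\widehat W$; equivalently $X_\infty$ is the set of limits in $\widehat W$ of sequences $(x_n)$, $x_n\in X_n$, that are fundamental in $W$. Then $X_\infty$ is a closed subset of the complete space $\widehat W$, hence complete; it is a set (a subset of the set $\widehat W$), so it represents an element of $\GH$; and it is non-empty --- start from any $x_1\in X_1$ and pick $x_{n+1}\in X_{n+1}$ with $|x_nx_{n+1}|<\e_n$ to obtain a fundamental sequence. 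To estimate $d_{GH}(X_n,X_\infty)\le d_H^{\widehat W}(X_n,X_\infty)$ I would argue in two directions. Given $x\in X_n$, the same construction started from $x$ yields $x=x_n,x_{n+1},\dots$ with $|x_kx_{k+1}|<\e_k$, a fundamental sequence whose limit $\xi\in X_\infty$ satisfies $|x\xi|\le r_n$, so $|xX_\infty|\le r_n$. Conversely, any $\xi\in X_\infty$ is a limit of points $y_j$ belonging to spaces $X_{k_j}$ with $k_j\ge n$, and each such $y_j$ lies within $\e_n+\dots+\e_{k_j-1}\le r_n$ of $X_n$, whence $|\xi X_n|\le r_n$. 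Therefore $d_H^{\widehat W}(X_n,X_\infty)\le r_n\to0$, so $X_n\to X_\infty$, and the original fundamental sequence converges to $X_\infty$. Since $d_{GH}(X_n,X_\infty)<\infty$, the space $X_\infty$ lies in the same cloud, so clouds are complete as well.

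\textbf{The main obstacle.} Once $W$ (hence $\widehat W$ and $X_\infty$) is in place, the two Hausdorff estimates are routine. The real work is building and bookkeeping $W$ in the unbounded, possibly non-separable case: one must verify that the chain amalgam genuinely restricts to the metric of each $X_n$, that all mutual distances in $W$ remain finite, and --- for the lower Hausdorff bound --- that no point of $X_n$ is ``lost at infinity'' when passing to $X_\infty$, which is precisely what the trajectories $x_n,x_{n+1},\dots$ guarantee (each stays within the fixed finite budget $r_n$ of $X_n$, regardless of how far out in the unbounded $X_k$ it travels). Notably, separability, total boundedness, and finiteness of diameters are never used, which is why the argument applies verbatim to all of $\GH$ and to every individual cloud.
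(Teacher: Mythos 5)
Your proof is correct, and it takes a genuinely different route from the paper's. The paper never realizes the spaces in a common ambient space: it fixes correspondences $R_n\in\cR(X_n,X_{n+1})$ with $\dis R_n<1/2^n$, forms ``threads'' $x_1,x_2,\dots$ with $x_{n+1}\in R_n(x_n)$, shows that $\lim_k|x_kx'_k|$ exists for any two threads and defines a pseudometric on the set of threads, and takes the metric quotient as the limit space; convergence is then read off from the distortion of the tautological correspondence between thread classes and their $n$-th coordinates. You instead use the ambient-space formulation of $d_{GH}$: realize each consecutive pair in some $Z_n$ with small Hausdorff distance, amalgamate the chain into one space $W$, complete it, and take $X_\infty=\bigcap_N\overline{\bigcup_{n\ge N}X_n}$, estimating $d_H(X_n,X_\infty)\le r_n$ on both sides. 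The two limit spaces are essentially the same object (Cauchy chains modulo zero distance), but the trade-off is real: your version gives all the $X_n$ and the limit as honest subsets of one metric space, so the convergence estimate is a transparent Hausdorff computation, at the price of having to justify the infinite chain amalgam (that gluing along isometrically embedded copies is isometric on each piece, that the direct limit is a metric space, and that all distances stay finite) --- this is the one load-bearing step you assert rather than prove, though it is standard and your one-line justification via the triangle inequality through the shared copy is the correct one. The paper's version needs no gluing and no completion, only the arithmetic of distortions, but constructs the limit more abstractly. Both arguments correctly avoid any use of compactness, separability or total boundedness, which is the point of the theorem.
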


\begin{proof}
Let $X_1,X_2,\ldots$ be a fundamental sequence. Without loss of generality, we will assume that $2d_{GH}(X_n,X_{n+1})\le1/2^n$. For each $n$, choose a correspondence $R_n\in\cR(X_n,X_{n=1})$ such that $\dis R_n<1/2^n$. We put $\bX=\sqcup_{n=1}^\infty$, and for each $x_1\in X_1$, we call the sequence $x_1,x_2\in\bX$ a \emph{thread starting at $x_1$} if for any $n\ge1$ it holds $x_{n+1}\in R_n(x_n)$. We denote the set of all such threads by $N(\bX)$. For threads $\nu=(x_1,x_2,\ldots)$ and $\nu'=(x'_1,x'_2,\ldots)$, consider the sequence of real numbers $|x_1x'_1|,|x_2x'_2|,\ldots$. Since $\dis R_n<1/2^n$ then $\bigl||x_nx'_n|-|x_{n+1}x'_{n+1}|\bigr|<1/2^n$, whence for any $m\ge1$ we have
\begin{multline*}
\bigl||x_nx'_n|-|x_{n+m}x'_{n+m}|\bigr|=
\bigl||x_nx'_n|-|x_{n+1}x'_{n+1}|+|x_{n+1}x'_{n+1}|-\cdots-|x_{n+m}x'_{n+m}|\bigr|\\
\le\sum_{k=1}^m\bigl||x_{n+k-1}x'_{n+k-1}|-|x_{n+k}x'_{n+k}|\bigr|<\sum_{k=1}^m\frac1{2^{n+k-1}}<\frac1{2^{n-1}},
\end{multline*}
therefore, the sequence of numbers $|x_1x'_1|,|x_2x'_2|, \ldots$ is fundamental and, therefore, there is a limit which we denote by $|\nu\nu'|$. It is clear that these limits define a distance function on $N(\bX) $. Since for any three threads $\nu=(x_1,x_2,\ldots)$, $\nu'=(x'_1,x'_2,\ldots)$, $\nu''=(x''_ 1,x''_2,\ldots)$, and for each $n$, the triangle inequalities hold for the points $x_n$, $x'_n$, $x''_n$, the same is true for the distances between $\nu$, $\nu'$, and $\nu''$. Thus, we have defined a pseudometric on $N(\bX)$. Having factorized the resulting pseudometric space with respect to the equivalence relation generated by zero distances, we obtain a metric space which we denote by $X$. For a thread $\nu\in N(\bX)$, the class of this equivalence containing $\nu$ we denote by $[\nu]$.

For each $n$, consider the relation $R'_n\ss X\x X_n$ defined as follows: for each $x$, consider all threads $\nu=(x_1,x_2,\ldots,x_n,\ldots)\in x$ and put all $(x,x_n)$ into $R'_n$. Since each $x_n$ belongs to some thread, then $R'\in\cR(X,X_n)$. Let us estimate the distortion of the correspondence $R '$. Choose arbitrary $x,x'\in X$, threads $\nu=(x_1,x_2,\ldots,x_n,\ldots)\in x$ and $\nu'=(x'_1,x'_2,\ldots,x'_n,\ldots)\in x'$, then $|\nu\nu'|=\lim_{k\to\infty}|x_kx'_k|$, and as shown above, $\bigl||x_nx'_n|-|x_{n+m}x'_{n+m}|\bigr|<1/2^{n-1}$, whence $\bigl||x_nx'_n|-|\nu\nu'|\bigr|\le1/2^{n-1}$. Thus, $\dis R'\le1/2^{n-1}$, therefore, $X_n\toGH X$.
\end{proof}

\section{Contracted and rain clouds}
\markright{\thesection.~Contracted and rain clouds}
Starting from this section, we will study how the similarity transformation changes the Gromov--Hausdorff distance, in particular, whether the metric space remains in the cloud, or jumps to another one.

\begin{cor}\label{cor:CloudContract}
Suppose that in a cloud $\cC$ there exists $X\in\cC$ such that for some $0<\l<1$, the condition $\l X\in\cC$ is fulfilled. Then the mapping $H_\l\:\GH\to\GH$, $H_\l\:Y\mapsto\l Y$, transforms the cloud $\cC$ into itself, and its restriction to $\cC$ is a contracting mapping. Thus, for any $X,Y\in\CC$, there are limits $X'=\lim_{n\to\infty}H_\l^n(X)$, $Y'=\lim_{n\to\infty}H_\l^n(Y)$, with $d_{GH}(X',Y')=0$.
\end{cor}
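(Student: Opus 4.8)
The plan is to reduce the corollary to the completeness theorem (Theorem~\ref{thm:GHcomplete}) together with the scaling identity from Theorem~\ref{thm:estim}\refitem{thm:estim:5}, namely $d_{GH}(\l A,\l B)=\l\,d_{GH}(A,B)$.

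First I would show that $H_\l$ maps $\cC$ into itself. Pick the given $X\in\cC$ with $\l X\in\cC$, so $d_{GH}(X,\l X)<\infty$. For an arbitrary $Y\in\cC$ we have $d_{GH}(X,Y)<\infty$, hence by the scaling identity $d_{GH}(\l X,\l Y)=\l\,d_{GH}(X,Y)<\infty$; combining with $d_{GH}(Y,X)<\infty$ and $d_{GH}(X,\l X)<\infty$ via the triangle inequality gives $d_{GH}(Y,\l Y)<\infty$, so $\l Y$ lies in the same cloud as $Y$, i.e.\ $\l Y\in\cC$. Thus $H_\l(\cC)\ss\cC$, and the restriction $H_\l|_\cC$ is a self-map of the (finite) pseudometric space $\cC$.

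Next, $H_\l|_\cC$ is a contraction with ratio $\l<1$: for any $Y,Z\in\cC$, the scaling identity gives $d_{GH}(H_\l(Y),H_\l(Z))=d_{GH}(\l Y,\l Z)=\l\,d_{GH}(Y,Z)$. Hence for fixed $X\in\cC$ the sequence $H_\l^n(X)$ satisfies $d_{GH}(H_\l^{n+1}(X),H_\l^n(X))=\l^n d_{GH}(\l X,X)$, so the partial sums of the gaps are dominated by the geometric series $\sum_n \l^n d_{GH}(\l X,X)<\infty$; therefore $(H_\l^n(X))_n$ is a fundamental sequence in $\cC$. By Theorem~\ref{thm:GHcomplete} (and the remark there that clouds are complete) it converges to some $X'\in\cC$; likewise $H_\l^n(Y)\to Y'$ for any other $Y\in\cC$. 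Finally, $d_{GH}(H_\l^n(X),H_\l^n(Y))=\l^n d_{GH}(X,Y)\to 0$, and since $d_{GH}$ is continuous with respect to itself (it is a pseudometric, so $|d_{GH}(A,B)-d_{GH}(A',B')|\le d_{GH}(A,A')+d_{GH}(B,B')$), passing to the limit yields $d_{GH}(X',Y')=0$.

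The only genuinely delicate point is the convergence of $H_\l^n(X)$, which is why the completeness theorem is invoked: a priori $\cC$ is merely a pseudometric proper class, but the contraction estimate confines the whole sequence and its limit to a single cloud, where completeness applies. (One should also note in passing that the limit $X'$ does not depend on the chosen starting point within $\cC$, since $d_{GH}(X',Y')=0$ for all $X,Y$; this makes the ``contraction to a point'' picture precise in the pseudometric sense.) Everything else is a routine combination of the triangle inequality and the homogeneity of $d_{GH}$ under $H_\l$.
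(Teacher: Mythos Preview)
Your proof is correct and follows essentially the same approach as the paper: use the scaling identity $d_{GH}(\l A,\l B)=\l\,d_{GH}(A,B)$ to see that $H_\l$ maps $\cC$ into itself and is a contraction, then invoke completeness (Theorem~\ref{thm:GHcomplete}) to obtain the limits and conclude they are at zero distance. The paper's proof is terser---it simply cites the contraction-mapping principle for pseudometric spaces---whereas you spell out the Cauchy estimate and the passage to the limit explicitly, but the underlying argument is the same.
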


\begin{proof}
For any $Y\in\cC$, we have $d_{GH}(\l X,\l Y)=\l d_{GH}(X,Y)<\infty$, therefore $\l Y\in\cC$, and the mapping $H_\l$ is contracting. Since the class $\GH$ is complete by Theorem~\ref{thm:GHcomplete}, all sequences $H_\l^n(Z)$ converges as $n\to\infty$, and all their limits are at zero distance from each other (the uniqueness result on the limit of a contracting mapping in the case of pseudometric spaces).
\end{proof}

For $X$ from a cloud $\cC$, we denote by $\L_X$ the set of all $\l>0$ for which $\l X\in\cC$. The following result immediately follows from the equality $d_{GH}(\l X,\l Y)=\l d_{GH}(X,Y)$.

\begin{prop}\label{prop:AllGoodL}
The set $\L_X$ does not depend on the choice of $X\in\cC$.
\end{prop}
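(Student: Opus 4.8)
The plan is to show that $\L_X = \L_Y$ for any two spaces $X, Y$ in the same cloud $\cC$, which by symmetry reduces to showing $\L_X \ss \L_Y$. Fix $\l \in \L_X$, so by definition $\l X \in \cC$, i.e.\ $d_{GH}(X, \l X) < \infty$. I want to deduce that $\l Y \in \cC$, i.e.\ that $d_{GH}(Y, \l Y) < \infty$.

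First I would use the triangle inequality to write
$$
d_{GH}(Y, \l Y) \le d_{GH}(Y, X) + d_{GH}(X, \l X) + d_{GH}(\l X, \l Y).
$$
The middle term is finite because $\l \in \L_X$. The first term $d_{GH}(Y,X)$ is finite because $X$ and $Y$ lie in the same cloud $\cC$. For the last term, I would invoke Theorem~\ref{thm:estim}\refitem{thm:estim:5}: $d_{GH}(\l X, \l Y) = \l\, d_{GH}(X, Y)$, which is again finite since $X, Y \in \cC$. Hence all three terms on the right are finite, so $d_{GH}(Y, \l Y) < \infty$, which means $\l Y$ lies in the cloud of $Y$, namely $\cC$; thus $\l \in \L_Y$. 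This gives $\L_X \ss \L_Y$, and swapping the roles of $X$ and $Y$ gives the reverse inclusion, so $\L_X = \L_Y$.

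There is essentially no obstacle here: the statement is an immediate consequence of the homogeneity identity $d_{GH}(\l X, \l Y) = \l\, d_{GH}(X, Y)$ together with the finiteness of Gromov--Hausdorff distances within a cloud and the triangle inequality. The only point requiring a word of care is that a cloud is by definition an equivalence class of the relation ``distance zero'', not ``finite distance'', but the text has already observed that the distance between points of the same cloud is finite and between points of different clouds is infinite, so ``$d_{GH}(Y, \l Y) < \infty$'' is indeed equivalent to ``$\l Y \in \cC$''. I would state the proof in two or three lines along exactly these lines.
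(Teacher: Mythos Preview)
Your proof is correct and matches the paper's approach: the paper simply asserts that the result ``immediately follows from the equality $d_{GH}(\l X,\l Y)=\l\,d_{GH}(X,Y)$,'' and your argument is a careful unpacking of exactly this, via the triangle inequality and the finiteness of distances within a cloud. Your remark about the cloud being defined via zero distance rather than finite distance is a fair clarification, already handled in the paper just before the definition of $\L_X$.
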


Proposition~\ref{prop:AllGoodL} allows us to give the following definition: for a cloud $\cC$, we put $\L_\cC=\L_X$ for arbitrary $X\in\cC$ and call it the \emph{cloud $\cC$ stabilizer}.

\begin{rk}
It is easy to see that the stabilizer $\L_\cC$ of the cloud $\cC$ is a subgroup in the multiplicative group of positive real numbers. In addition, Example~\ref{examp:couldRainy} (see below) shows that $\L_\cC$ does not preserve the additive structure: the sum of elements from $\L_\cC$ may belong not to $\L_\cC$.
\end{rk}

\begin{examp}\label{examp:Belalov}
Let $\cC$ be a cloud containing $\R^n$. Then for any $\l>0$, the spaces $\l\R^n$ and $\R^n$ are isometric. This implies that for any $X\in\cC$ and every $\l>0$, we have $d_{GH}(\l X,\R^n)=\l\,d_{GH}(X,\R^n)$, so $\l X\toGH\R^n$ as $\l\to0+$. In particular, $\L_\cC=(0,\infty)$.

This example was first considered by Ilya Belalov.
\end{examp}

\begin{dfn}
A metric space $X$ is called \emph{$\dil$-invariant\/} if, for all $\l>0$, the spaces $X$ and $\l X$ are isometric. In other words, $\dil$-invariance means that all similarities of a metric space are contained in the isometry group $\Iso(X)$ of this space.
\end{dfn}

\begin{prb}
Describe all $\dil$-invariant metric spaces.
\end{prb}

\begin{dfn}
If in a cloud $\cC$ there is a $Z$ such that for all $X\in\cC$ it holds $\l X\toGH Z$ as $\l\to0+$, then such a cloud is called \emph{contractible}.
\end{dfn}

Similarly with Example~\ref{examp:Belalov}, the following result can be proved.

\begin{cor}
Each cloud $\cC$ containing a $\dil$-invariant space is contractible. In particular, it holds $\L_\cC=(0,\infty)$.
\end{cor}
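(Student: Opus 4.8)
The plan is to mimic the argument of Example~\ref{examp:Belalov}, replacing $\R^n$ by an arbitrary $\dil$-invariant space $Z\in\cC$. First I would record the two facts that make everything work: since $Z$ is $\dil$-invariant, $\l Z$ is isometric to $Z$ for every $\l>0$, hence $d_{GH}(W,\l Z)=d_{GH}(W,Z)$ for any $W\in\GH$; and by Theorem~\ref{thm:estim}\refitem{thm:estim:5} the similarity $H_\l$ scales distances, $d_{GH}(\l X,\l Z)=\l\,d_{GH}(X,Z)$.

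Next, fix an arbitrary $X\in\cC$. Because $X$ and $Z$ lie in the same cloud $\cC$, we have $d_{GH}(X,Z)<\infty$. Combining the two facts above gives, for every $\l>0$,
$$
d_{GH}(\l X,Z)=d_{GH}(\l X,\l Z)=\l\,d_{GH}(X,Z),
$$
so $d_{GH}(\l X,Z)\to 0$ as $\l\to0+$, i.e.\ $\l X\toGH Z$. Since this holds for every $X\in\cC$ with one and the same limit space $Z$, the cloud $\cC$ is contractible by definition.

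For the last assertion, I would use the triangle inequality: for any $\l>0$ and any $X\in\cC$,
$$
d_{GH}(\l X,X)\le d_{GH}(\l X,Z)+d_{GH}(Z,X)=(\l+1)\,d_{GH}(X,Z)<\infty,
$$
so $\l X$ lies in the cloud $\cC$ of $X$, which means $\l\in\L_X=\L_\cC$ (using Proposition~\ref{prop:AllGoodL}). As $\l>0$ was arbitrary, $\L_\cC=(0,\infty)$.

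There is really no hard step here; the only point deserving care is the first reduction $d_{GH}(\l X,Z)=d_{GH}(\l X,\l Z)$, which is exactly where $\dil$-invariance of $Z$ (and not merely $Z\in\cC$) is used, together with the isometry-invariance of $d_{GH}$ from Theorem~\ref{thm:estim}. Everything else is the scaling identity and the triangle inequality.
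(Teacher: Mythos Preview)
Your proof is correct and follows exactly the route the paper indicates: the paper itself gives no detailed argument but simply says the corollary is proved ``similarly with Example~\ref{examp:Belalov}'', and you carry this out verbatim with the $\dil$-invariant space $Z$ in place of $\R^n$. The only tiny inaccuracy is the attribution of isometry-invariance of $d_{GH}$ to Theorem~\ref{thm:estim}; it is actually stated just before Theorem~\ref{thm:classGH} (and follows from the pseudometric property), but this does not affect the argument.
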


The definition of a $\dil$-invariant space can be generalized.

\begin{dfn}
We say that a metric space $X$ is a \emph{generalized $\dil$-invariant\/} if for any $\l>0$, we have $\d_{GH}(X,\l X)=0$.
\end{dfn}

\begin{cor}
Each cloud $\cC$ containing a generalized $\dil$-invariant space is contractible. In particular, it satisfies $\L_\cC=(0,\infty)$.
\end{cor}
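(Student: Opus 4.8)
The plan is to mimic the argument sketched for Example~\ref{examp:Belalov}, replacing the use of an isometry $\l X\approx X$ with the weaker hypothesis $d_{GH}(X,\l X)=0$. So suppose $\cC$ contains a generalized $\dil$-invariant space $Z$, and take an arbitrary $X\in\cC$. First I would estimate $d_{GH}(\l X, Z)$ for $\l>0$. By the triangle inequality,
$$
d_{GH}(\l X, Z)\le d_{GH}(\l X,\l Z)+d_{GH}(\l Z, Z).
$$
The second term vanishes by the generalized $\dil$-invariance of $Z$ (applied to the coefficient $\l$, using symmetry of the distance). For the first term, Theorem~\ref{thm:estim}\refitem{thm:estim:5} gives $d_{GH}(\l X,\l Z)=\l\,d_{GH}(X,Z)$, and $d_{GH}(X,Z)<\infty$ since $X$ and $Z$ lie in the same cloud $\cC$. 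Hence $d_{GH}(\l X, Z)\le\l\,d_{GH}(X,Z)\to 0$ as $\l\to 0+$, i.e.\ $\l X\toGH Z$. This proves contractibility of $\cC$ with witness $Z$.

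For the stabilizer claim, I would argue as follows. Fix any $\l>0$ and any $X\in\cC$; we must show $\l X\in\cC$, i.e.\ $d_{GH}(X,\l X)<\infty$. Again by the triangle inequality and the two estimates above,
$$
d_{GH}(X,\l X)\le d_{GH}(X,Z)+d_{GH}(Z,\l X)\le d_{GH}(X,Z)+\l\,d_{GH}(X,Z)=(1+\l)\,d_{GH}(X,Z)<\infty,
$$
so $\l X\in\cC$ for every $\l>0$, whence $\L_\cC=\L_X=(0,\infty)$. (Alternatively, once $\cC$ is known to be contractible with witness $Z$, one gets $\L_\cC=(0,\infty)$ directly: $d_{GH}(X,\l X)\le d_{GH}(X,Z)+d_{GH}(Z,\l X)$ and both terms are finite.)

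There is no serious obstacle here; the statement is essentially a corollary of Theorem~\ref{thm:estim}\refitem{thm:estim:5} and the definition of a cloud. The only point requiring a little care is that ``generalized $\dil$-invariant'' asserts $d_{GH}(Z,\l Z)=0$ rather than an actual isometry, so one cannot substitute $\l Z$ for $Z$ inside an isometry-class representative; but since all the estimates are carried out at the level of the pseudometric $d_{GH}$ (which is what a cloud is defined by), a zero distance is exactly as good as an isometry for every step above. One should also note the degenerate case $\l\to 0+$ is handled purely as a limit of the estimate $d_{GH}(\l X,Z)\le\l\,d_{GH}(X,Z)$, with no need to interpret $0\cdot X$.
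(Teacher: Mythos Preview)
Your proof is correct and follows precisely the approach the paper intends: the paper states this corollary without an explicit proof, relying on the argument already sketched in Example~\ref{examp:Belalov} and the preceding corollary for $\dil$-invariant spaces. Your adaptation---replacing the isometry $\l Z\approx Z$ by the triangle-inequality step $d_{GH}(\l X,Z)\le d_{GH}(\l X,\l Z)+d_{GH}(\l Z,Z)=\l\,d_{GH}(X,Z)+0$---is exactly the modification needed to pass from genuine $\dil$-invariance to generalized $\dil$-invariance, and your derivation of $\L_\cC=(0,\infty)$ is likewise the intended one.
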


\begin{prb}
Is it true that each cloud $\cC$ with $\L_\cC=(0,\infty)$ is contractible?
\end{prb}

\begin{dfn}
For $A\ss(0,\infty)$, a metric space $X$ is called \emph{$A$-invariant\/} if for all $\l\in A$ it holds $d_{GH}(X,\l X)=0$.
\end{dfn}

\begin{cor}
Let $\l\in(0,1)$ and $A=\{\l^r:r\ge0,\,r\in\Q\}$. Suppose that there exist a cloud $\cC$ and a metric space $X\in\cC$ such that for all $\l^r\in A$ the spaces $\l^rX$ belong to $\cC$. Let $Z$ be the limiting space for the fundamental sequence $X_n=H_\l^n(X)$. Then for all $\l^r\in A$, the space $Z$ is also a limiting one for the fundamental sequence $Y_n=H_{\l^r}^n(X)=H_{\l^{rn}}(X)$. Moreover, $Z$ is $A$-invariant.
\end{cor}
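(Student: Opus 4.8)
The plan is to derive everything from the contraction principle of Corollary~\ref{cor:CloudContract}, the completeness of $\GH$ (Theorem~\ref{thm:GHcomplete}), and the scaling identity $d_{GH}(\l U,\l V)=\l\,d_{GH}(U,V)$ of Theorem~\ref{thm:estim}\refitem{thm:estim:5}, together with the simple observation that the $H_\l$-orbit and the $H_{\l^r}$-orbit of $X$ eventually run along a \emph{common} subsequence. Throughout, ``limit'' is understood in the pseudometric sense: limits exist by completeness and are unique up to zero $d_{GH}$-distance; we shall also use the two standard pseudometric-space facts that a subsequence of a convergent sequence has the same limit and that a fundamental sequence with a convergent subsequence converges to that limit.

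\emph{Set-up and the key observation.} Since $\l=\l^1\in A$, we have $\l X\in\cC$ with $0<\l<1$, so Corollary~\ref{cor:CloudContract} applies: $H_\l$ maps $\cC$ into itself, its restriction to $\cC$ is contracting, and the fundamental sequence $X_n=H_\l^n(X)=\l^nX$ converges to $Z$. Fix a rational $r>0$ (the case $r=0$ gives the constant sequence $Y_n=X$ and is a degenerate one, not literally covered by the ``$Y_n$'' clause), and write $r=p/q$ with $p,q$ positive integers. Since $\l^r\in A$ we also have $\l^rX\in\cC$ with $0<\l^r<1$, so Corollary~\ref{cor:CloudContract}, applied now with the coefficient $\l^r$, shows that $H_{\l^r}$ contracts $\cC$ and that its iterates $Y_n=H_{\l^r}^n(X)=\l^{rn}X$ form a fundamental sequence converging to some $Z_r$. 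The point is that $Y_{qn}=\l^{r\cdot qn}X=\l^{pn}X=X_{pn}$, so $\{Y_{qn}\}_{n\ge1}$ is simultaneously a subsequence of $\{X_m\}_{m\ge1}$ and of $\{Y_n\}_{n\ge1}$.

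\emph{Identification and $A$-invariance.} As a subsequence of $\{X_m\}$, which converges to $Z$, the sequence $\{Y_{qn}\}_n$ converges to $Z$; as a convergent subsequence of the fundamental sequence $\{Y_n\}$, it forces $\{Y_n\}$ itself to converge to the same limit. Hence $d_{GH}(Z_r,Z)=0$, i.e. $Z$ is a limiting space of $Y_n$, which is the first assertion. For the second, note that by Theorem~\ref{thm:estim}\refitem{thm:estim:5} the map $H_{\l^r}$ is $\l^r$-Lipschitz, hence continuous, so from $Y_n\toGH Z$ and $H_{\l^r}(Y_n)=Y_{n+1}\toGH Z$ we get
\[
\l^rZ=H_{\l^r}(Z)=\lim_{n\to\infty}Y_{n+1}=Z
\]
in the pseudometric sense, i.e. $d_{GH}(\l^rZ,Z)=0$; this is just the statement that the limit of the iterates of a contraction is its pseudometric fixed point. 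For $r=0$ the equality $\l^0Z=Z$ is trivial. Therefore $d_{GH}(Z,\mu Z)=0$ for every $\mu\in A$, i.e. $Z$ is $A$-invariant.

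I do not expect a genuine obstacle: the whole argument is a bookkeeping exercise combining already-proved facts, and the ``hard'' part is merely spotting that the two orbits $\{\l^nX\}_n$ and $\{\l^{rn}X\}_n$ share the subsequence $\{\l^{pn}X\}_n$ when $r=p/q$. The only points requiring (routine) care are the two pseudometric-space lemmas mentioned above and the harmless identification of $r=0$ as a degenerate case.
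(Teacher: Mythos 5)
Your proof is correct and follows essentially the same route as the paper: both hinge on the observation that for $r=p/q$ the orbit $\{\l^{rn}X\}$ contains the subsequence $\{\l^{pn}X\}$ common with $\{X_n\}$, then use fundamentality plus uniqueness of pseudometric limits, and finally derive $A$-invariance from the scaling identity $d_{GH}(\l^r U,\l^r V)=\l^r d_{GH}(U,V)$ and uniqueness of the limit. The only (cosmetic) difference is that for the last step you apply $H_{\l^r}$ to the whole sequence $Y_n$ and compare $Y_{n+1}\toGH Z$ with $H_{\l^r}(Y_n)\toGH\l^rZ$, whereas the paper applies $H_\l^{pi}$ to $\l^rX$ and compares the two limits of that subsequence.
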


\begin{proof}
The sequence $\l^{rn}$ contains a subsequence $\l^{n_i}$, $n_i\in\N$. Indeed, if $r=p/q$, $p,q\in\Z$, $p\ge0$, $q>0$, then for $n=qi$ we get $\l^{rn}=\l_{pi}$. It remains to set $n_i=pi$. Thus, $d_{GH}(Y_{n_i},Z)\to0$ as $i\to\infty$, whence the fundamentality of the sequence $Y_n$ implies $Y_n\toGH Z$.

On the other hand, for each $r\in A$, the sequence $\l^{rn}$ contains a subsequence $\l^{n_i}\l^r$, $n_i\in\N$. Indeed, putting $r=p/q$ again and choosing $n$ of the form $1+qi$, we get $\l^{rn}=\l^{r+pi}=\l^{pi}\l^r$. It remains to put $n_i=pi$. Thus, $Z$ is the limit of the sequence $H_\l^{n_i}(\l^rX)$. However, the $\l^rZ$ is the limit of this sequence as well. From the ``uniqueness of the limit'' it follows that $d_{GH}(Z,\l^rZ)=0$. The proof is complete.
\end{proof}

\begin{dfn}
If $\L_\cC\ne(0,\infty)$, then we call the cloud $\cC$ \emph{rain}, and each of its elements a \emph{drop}.
\end{dfn}

\section{Rain clouds}
\markright{\thesection.~Rain clouds}
Let us show that rain clouds exist by constructing a series of examples of monotonically increasing sequences of real numbers, which, with an induced metric from the real line, are drops in their corresponding clouds. We start with a few general results.

\begin{lem}\label{lem:unbounded-and-big-xy}
Let $X,Y\in\GH$ be unbounded spaces for which there exists $R\in\cR(X,Y)$ with $\dis R<\infty$.
Then for each $x_0\in X$, $y_0\in Y$, and $r>0$, there exists $(x,y)\in R$ such that $x\in X\sm U_r(x_0)$ and $y\in Y\sm U_r(y_0)$.
\end{lem}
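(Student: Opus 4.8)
The plan is to argue by contradiction, exploiting the fact that a correspondence with finite distortion $\dis R = \sigma < \infty$ cannot map an unbounded set onto a bounded one and vice versa. First I would record the basic consequence of $\dis R < \infty$: for any $(a,b), (a',b') \in R$ we have $\bigl|\,|aa'| - |bb'|\,\bigr| \le \sigma$. In particular, fixing a reference pair $(p,q) \in R$, every $(x,y) \in R$ satisfies $\bigl|\,|xp| - |yq|\,\bigr| \le \sigma$, so $|xp|$ and $|yq|$ differ by at most $\sigma$. This ``synchronization'' of distances to a base point is the engine of the proof.

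Now suppose, for contradiction, that for some $x_0 \in X$, $y_0 \in Y$ and $r > 0$ there is no $(x,y) \in R$ with simultaneously $x \notin U_r(x_0)$ and $y \notin U_r(y_0)$. Equivalently, every $(x,y) \in R$ has $|xx_0| < r$ or $|yy_0| < r$. I would then split $R$ into $R_X = \{(x,y) \in R : |yy_0| \ge r\}$ (forcing $|xx_0| < r$) and $R_Y = R \setminus R_X$ (where $|yy_0| < r$). Since $R$ is a correspondence, $X$ is covered by the first projections: $X = \pi_X(R_X) \cup \pi_X(R_Y)$. I claim both pieces are bounded, contradicting that $X$ is unbounded. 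For $\pi_X(R_X)$: every such $x$ has $|xx_0| < r$, so this set lies in $U_r(x_0)$, hence is bounded. For $\pi_X(R_Y)$: pick once and for all a pair $(a,b) \in R_Y$ (if $R_Y = \emptyset$ we are already done, as then $X \subset U_r(x_0)$). For any $(x,y) \in R_Y$ we have $|yy_0| < r$ and $|bb_0|$—wait, more directly: both $y$ and $b$ lie in $U_r(y_0)$, so $|yb| < 2r$; then $\bigl|\,|xa| - |yb|\,\bigr| \le \sigma$ gives $|xa| < 2r + \sigma$, so $\pi_X(R_Y) \subset B_{2r+\sigma}(a)$, again bounded. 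Therefore $X$ is a union of two bounded sets and hence bounded, contradicting the hypothesis. This yields the existence of the desired $(x,y) \in R$ with $x \in X \setminus U_r(x_0)$ and $y \in Y \setminus U_r(y_0)$.

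I do not anticipate a serious obstacle here; the only point requiring a little care is the degenerate case $R_Y = \emptyset$ (or symmetrically $R_X = \emptyset$), which must be handled separately but is immediate. One should also double-check that the roles of $X$ and $Y$ enter symmetrically so that the conclusion genuinely places the chosen point outside $U_r$ in \emph{both} spaces — this is automatic from the definition of $R_X$, $R_Y$ since membership in $R_X$ already forces $x \notin U_r(x_0)$ once we also know (from $R_Y$ being the complement) that points of $R_X$ have $|yy_0| \ge r$; but in fact we want a point with $|xx_0| \ge r$ \emph{and} $|yy_0| \ge r$, so the correct final step is: having shown $X$ cannot be exhausted by $U_r(x_0)$-preimages together with $R_Y$, there must exist $(x,y) \in R$ with $|xx_0| \ge r$ and $|yy_0| \ge r$ simultaneously, which is exactly the negation of our contradiction hypothesis. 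So the cleanest phrasing is simply: assume no such pair exists, derive that $X \subset U_r(x_0) \cup \pi_X(R_Y)$ with $\pi_X(R_Y)$ bounded as above, conclude $X$ bounded, contradiction.
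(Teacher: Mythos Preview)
Your proof is correct. Both you and the paper argue by contradiction from the hypothesis ``every $(x,y)\in R$ has $|xx_0|<r$ or $|yy_0|<r$'', and both rely on the single inequality $\bigl||xa|-|yb|\bigr|\le\sigma$ for pairs in $R$. The only cosmetic difference is the final contradiction reached: the paper picks a sequence $y_k\in Y$ with $|y_0y_k|\to\infty$, notes that its $R$-partners $x_k$ must satisfy $|x_0x_k|<r$, and concludes directly that $\dis R\ge\sup_l\bigl||x_kx_l|-|y_ky_l|\bigr|=\infty$; you instead split $R$ into $R_X$ and $R_Y$, bound each projection to $X$, and conclude that $X$ itself is bounded. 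These are two sides of the same coin (bounded $x$'s paired with unbounded $y$'s), and neither buys anything the other does not.
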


\begin{proof}
Suppose the opposite, i.e., for some $x_0\in X$ and $y_0\in Y$, there exists $r>0$ such that each $(x,y)\in R$ satisfies $\min\bigl\{|xx_0|,|yy_0|\bigr\}<r$. Since the space $Y$ is unbounded, there exists a sequence $y_k\in Y$ such that $|y_0y_k|\to\infty$ as $k\to\infty$. Since $R$ is a correspondence, for every $k$ there exists $x_k$ such that $(x_k,y_k)\in R$. But then, for all sufficiently large $k$, we have $|x_0x_k|<r$. Therefore, choosing such a $k$, we get
$$
\dis R\ge\sup_l\bigl||x_kx_l|-|y_ky_l|\bigr|=\infty,
$$
a contradiction.
\end{proof}

\begin{thm}\label{thm:unbounded-spaces-suff-infinity}
Suppose that for unbounded spaces $X,Y\in\GH$, the following condition holds\/\rom:
there exist $x_0\in X$ and $y_0\in Y$ such that
$$
\D(r)=\inf\Bigl\{\bigl||xx'|-|yy'|\bigr|:x,x'\in X\sm U_r(x_0),\ y,y'\in Y\sm U_r(y_0), \ x\ne x', y\ne y'\Bigr\}\to\infty
$$
for $r\to\infty$. Then for each correspondence $R\in\cR$ we have $\dis R=\infty$, i.e., $d_{GH}(X,Y)=\infty$.
\end{thm}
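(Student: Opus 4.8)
The plan is to argue by contradiction: suppose there exists $R\in\cR(X,Y)$ with $\dis R<\infty$, and derive a contradiction with the hypothesis that $\D(r)\to\infty$. The key observation is that $\D(r)$ is built from the same quantity $\bigl||xx'|-|yy'|\bigr|$ that $\dis R$ bounds, so if we can exhibit, for arbitrarily large $r$, a pair of elements of $R$ both of whose components lie outside the respective $r$-balls, we are done. That is exactly the content of Lemma~\ref{lem:unbounded-and-big-xy}, which is available to us since $X$ and $Y$ are unbounded and $\dis R<\infty$.

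First I would fix $x_0\in X$ and $y_0\in Y$ as in the hypothesis. Let $r>0$ be arbitrary. By Lemma~\ref{lem:unbounded-and-big-xy} applied with this $r$ (and centers $x_0$, $y_0$), there is a pair $(x,y)\in R$ with $x\in X\sm U_r(x_0)$ and $y\in Y\sm U_r(y_0)$. The subtlety is that the definition of $\D(r)$ requires \emph{two} such pairs $(x,y)$ and $(x',y')$ with $x\ne x'$ and $y\ne y'$; so I would apply the lemma a second time, but with a larger radius $r' = r + |xx_0| + 1$ (or any radius large enough that the new point $x$ cannot coincide with the old one and likewise on the $Y$ side). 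This yields $(x',y')\in R$ with $x'\in X\sm U_{r'}(x_0)\ss X\sm U_r(x_0)$ and $y'\in Y\sm U_{r'}(y_0)\ss Y\sm U_r(y_0)$, and by the choice of $r'$ we automatically have $x\ne x'$ and $y\ne y'$. Then both pairs are admissible in the infimum defining $\D(r)$, so
$$
\dis R\ge\bigl||xx'|-|yy'|\bigr|\ge\D(r).
$$
Since $r$ was arbitrary and $\D(r)\to\infty$ as $r\to\infty$, letting $r\to\infty$ forces $\dis R=\infty$, contradicting $\dis R<\infty$. Hence every $R\in\cR(X,Y)$ has $\dis R=\infty$, and therefore $d_{GH}(X,Y)=\frac12\inf_R\dis R=\infty$.

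The main obstacle is the bookkeeping needed to guarantee the two pairs produced by Lemma~\ref{lem:unbounded-and-big-xy} are genuinely distinct in both coordinates; this is handled cleanly by invoking the lemma twice with a strictly larger radius the second time, as above. One should also double-check the degenerate possibility that $X$ or $Y$ has very few points outside a ball — but unboundedness guarantees infinitely many points at arbitrarily large distance from $x_0$ (resp.\ $y_0$), so there is always room to choose distinct witnesses, and no further case analysis is required.
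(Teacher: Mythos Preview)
Your proposal is correct and follows essentially the same approach as the paper: argue by contradiction, invoke Lemma~\ref{lem:unbounded-and-big-xy} twice with a strictly larger radius the second time to produce two $R$-pairs admissible in the infimum defining $\D(r)$, and conclude $\dis R\ge\D(r)\to\infty$. One small point: your explicit choice $r'=r+|xx_0|+1$ guarantees $x\ne x'$ but not $y\ne y'$ (nothing prevents $|yy_0|$ from already exceeding $r'$); the fix, which your parenthetical already anticipates, is to take $r'>\max\bigl\{|xx_0|,|yy_0|\bigr\}$, exactly as the paper does.
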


\begin{proof}
Suppose the opposite, i.e., let $\dis R<\infty$. Lemma~\ref{lem:unbounded-and-big-xy} implies that for every $r>0$, there exists $(x,y)\in R$ with $|x_0x|\ge r$ and $|y_0y|\ge r$. Choose $r'>r$ such that $r'>\max\bigl\{|x_0x|,|y_0y|\bigr\}$, then the pair $(x',y')\in R$ corresponding $r'$ satisfies $x\ne x'$ and $y\ne y'$. But then,
$$
\dis R\ge\bigl||xx'|-|yy'|\bigr|\ge\D(r),
$$
and since $r$ can be chosen arbitrarily large, we get $\dis R=\infty$, a contradiction.
\end{proof}

\begin{examp}\label{examp:couldRainy}
Let $p$ be a prime number greater than $2$, and $X=\{x_1=p,\,x_2=p^2,\,x_3=p^3,\ldots\}\ss\R$. Put $Y=2X$ and implement it as a sequence $y_i=2x_i$. Let us show that $d_{GH}(X,Y)=\infty$ and thus $X$ is a drop in its cloud $\cC$. To do that, we use Theorem~\ref{thm:unbounded-spaces-suff-infinity} to show that $\D(r)\to\infty$ as $r\to\infty$.

Choose arbitrary $x,x'\in X$, $x\ne x'$, and $y,y'\in Y$, $y\ne y'$, in such a way that
$$
\min\bigl\{|x_1x|,\,|x_1x'|,\,|y_1y|,\,|y_1y'|\bigr\}\ge r.
$$
Then for some positive integers $m$, $l$, $n$, $k$ we get $|xx'|=p^m-p^l$, $|yy'|=2(p^n-p^k)$. Since $x\ne x'$ and $y\ne y'$, then $m>l$ and $n>k$.

Put $s=\min\{m,l,n,k\}$, then the condition $r\to\infty$ is equivalent to $s\to\infty$. Further, let
$$
\dl=p^m-p^l-2p^n+2p^k=p^sz.
$$
Let us show that $z\ne0$, whence the required will follow, since $s$ can be chosen arbitrarily large, and $|z|\ge1$, therefore, $|\dl|\to\infty$ as $s\to\infty$, and $\D(r)\to\infty$ as $r\to\infty$, respectively, due to the arbitrariness of the choice $x$, $x'$, $y$, $y'$.

Let $z=0$, then $\D=0$, i.e., $p^l(p^{m-l}-1)=2p^k(p^{n-k}-1)$. Since $m-l>0$ and $n-k>0$, we get $l=k$, whence the equality $2p^{n-k}-p^{m-l}=1$ follows. Since $n-k>0$ and $m-l>0$, the left side of the equality is a multiple of $p$, a contradiction.
\end{examp}

\begin{rk}
Note that if in the construction from Example~\ref{examp:couldRainy} we multiply the space $X$ by $p$, then the space will remain in the cloud, since $d_{GH}(X,pX)<\infty$. Thus, it is not true that the existence of a metric space ``jumping out'' of its cloud for some $\l$, it follows that for all $\l>0$, other than $1$, the space $\l X$ does not lie in the cloud as well.
\end{rk}

\section{An example of studying the structure of a cloud stabilizer}
\markright{\thesection.~An example of studying the structure of a cloud stabilizer}

Let $\v\:\N\to\R$ be such a monotonically increasing function of a positive integer argument that $\v(n)\ge n$. For a real number $q>1$, consider the set
\begin{equation}\label{eq:1}
X_\v=\{x_n=q^{\v(n)}:n\in\N\}\ss\R
\end{equation}
with the metric induced from $\R$. We denote the cloud that contains the space $X_\v$ by the same symbol.

In the case of the function $\v(n)=n$, we denote the space $X_\v$ by $X_q$. Recall that by $H_\l\:\GH\to\GH$, $\l>0$, we denoted the similarity transformation $H_\l\:X\mapsto\l X$. Sometimes we will refer to the space $X_q$ as the space $\{x_n=q^n:n\in\Z\}\ss\R$, which defines the same cloud but is invariant under homothety $H_q$.

\begin{cor}\label{cor:lambda-in-Q}
If $q\in \N$ and $\l\in\L_{X_\v}\cap\Q$, then for any $M$ there exist $n>k>M$ and $m>l>M$ such that
\begin{equation}\label{eq:2}
\l=\frac{q^{\v(n)}-q^{\v(k)}}{q^{\v(m)}-q^{\v(l)}}.
\end{equation}
\end{cor}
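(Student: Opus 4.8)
The plan is to apply the Gromov--Hausdorff machinery to the correspondence realizing a finite distance between $X_\v$ and $\l X_\v$, and to extract from it a single pair of matched pairs of points whose ratio of gaps is exactly $\l$. Since $\l\in\L_{X_\v}$, the space $\l X_\v$ lies in the same cloud as $X_\v$, so $d_{GH}(X_\v,\l X_\v)<\infty$ and there exists a correspondence $R\in\cR(X_\v,\l X_\v)$ with $\dis R<\infty$. Both spaces are unbounded (as $q>1$ and $\v$ is increasing with $\v(n)\ge n$), so Lemma~\ref{lem:unbounded-and-big-xy} applies: for any threshold $r$ we can find matched pairs lying outside the $r$-balls around the smallest points of each space.

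The key step is to produce, for an arbitrary bound $M$, two distinct matched pairs far out in both sequences whose distance discrepancy is controlled by $\dis R$. Fix $M$ and pick $r$ large enough that every point of $X_\v$ at distance $\ge r$ from $x_1$ has index $>M$, and similarly on the $\l X_\v$ side. By Lemma~\ref{lem:unbounded-and-big-xy} choose $(x,y')\in R$ with both coordinates outside the corresponding $r$-balls; then choosing an even larger radius and applying the lemma again gives a second pair $(x',y'')\in R$, distinct from the first in both coordinates, still with all four indices $>M$. Writing $x=q^{\v(m)}$, $x'=q^{\v(l)}$ with $m>l>M$ and $y'=\l q^{\v(n)}$, $y''=\l q^{\v(k)}$ with $n>k>M$ (reindexing the labels to match \eqref{eq:2}), the bound $\dis R<\infty$ gives
$$
\Bigl|\,(q^{\v(m)}-q^{\v(l)})-\l\,(q^{\v(n)}-q^{\v(k)})\,\Bigr|\le\dis R.
$$
Now the crucial arithmetic observation: since $q\in\N$ and $\l=a/b$ is rational in lowest terms, the quantity $b(q^{\v(m)}-q^{\v(l)})-a(q^{\v(n)}-q^{\v(k)})$ is an \emph{integer}, and the displayed inequality says its absolute value is at most $b\cdot\dis R$, a fixed finite bound. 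On the other hand, each of $q^{\v(m)}-q^{\v(l)}$ and $q^{\v(n)}-q^{\v(k)}$ is a multiple of $q^{s}$ where $s=\min\{\v(m),\v(l),\v(n),\v(k)\}\ge\v(M+1)$, which we can make arbitrarily large by taking $M$ large in a preliminary step; hence the integer above is a multiple of $q^s$ of bounded absolute value, so for $M$ large enough it must be $0$. That forces $b(q^{\v(m)}-q^{\v(l)})=a(q^{\v(n)}-q^{\v(k)})$, i.e.\ exactly the identity \eqref{eq:2}. To obtain the conclusion \emph{for every} $M$ (not merely for $M$ large), note that once we have one valid quadruple with all indices exceeding some large $M_0\ge M$, it is in particular a quadruple with all indices exceeding $M$; so running the argument with $M_0:=\max\{M,M_1\}$ (where $M_1$ is the threshold making $q^s$ dominate $b\,\dis R$) settles the case of the given $M$.

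The main obstacle I anticipate is organizing the two applications of Lemma~\ref{lem:unbounded-and-big-xy} so that the second pair is genuinely distinct from the first in \emph{both} coordinates while keeping all indices above the required threshold; this is exactly the device already used in the proof of Theorem~\ref{thm:unbounded-spaces-suff-infinity} (choose $r'>r$ exceeding $\max\{|x_0x|,|y_0y|\}$ before reapplying the lemma), so it should transfer directly. The only other point requiring care is the bookkeeping that translates ``distance $\ge r$ from the first point'' into ``index $>M$'': since $x_n=q^{\v(n)}$ and $\v$ is increasing, $|x_1x_n|=q^{\v(n)}-q^{\v(1)}$ is itself increasing and unbounded, so a single monotone comparison suffices, and the same works verbatim on $\l X_\v$. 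Everything else is the rational-integer divisibility argument, which is routine once the quadruple is in hand.
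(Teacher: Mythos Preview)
Your argument is correct and follows essentially the same route as the paper's: the paper argues by contrapositive, showing that if no such quadruple exists then $\D(r)\to\infty$ and invoking Theorem~\ref{thm:unbounded-spaces-suff-infinity}, while you unpack that theorem directly, extracting two far-out matched pairs via Lemma~\ref{lem:unbounded-and-big-xy} and using the identical integer--divisibility observation ($a_1z\in\Z$, multiple of a large power of $q$, hence zero) to force equality. One cosmetic slip: with your labeling the vanishing discrepancy yields $\l=\dfrac{q^{\v(m)}-q^{\v(l)}}{q^{\v(n)}-q^{\v(k)}}$, the reciprocal of~\eqref{eq:2}; simply swap the names $(n,k)\leftrightarrow(m,l)$ to match the statement.
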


\begin{proof}
Let us show that otherwise $d_{GH}(X,\l X)=\infty$. To do this, we use Theorem~\ref{thm:unbounded-spaces-suff-infinity} to show that $\D(r)\to\infty$ as $r\to\infty$.

Let $\l=a_2/a_1$ for some coprime positive integers $a_1$ and $a_2$.
Choose arbitrary $x,x'\in X$, $x\ne x'$, and $y,y'\in Y$, $y\ne y'$ such that
$$
\min\bigl\{|x_1x|,\,|x_1x'|,\,|y_1y|,\,|y_1y'|\bigr\}\ge r.
$$
Then for some positive integers $n$, $k$, $m$, $l$ we have
$|xx'|=q^{\v(n)}-q^{\v(k)}$, $|yy'|=\l (q^{\v (m)}-q^{\v (l)})$.
Since $x\ne x'$ and $y\ne y'$, then $n>k$ and $m>l$.

Let $s=\min\{n,k,m,l\}$, then the condition $r\to\infty$ is equivalent to $s\to\infty$. Further, let
$$
\dl=q^{\v (n)}-q^{\v (k)}-\l (q^{\v (m)}-q^{\v (l)})=q^{\v (s)}z.
$$
Suppose that $z\ne 0$. Since $s>M$ can be chosen as large as desired, and the number $a_1z$ is an integer, then $|a_1z|\ge 1$. Then $|\dl|\to\infty$ as $s\to\infty$ (since the number $a_1$ is fixed) and, respectively, $\D(r)\to\infty$ as $r\to\infty$ due to the arbitrariness of the choice of $x$, $x'$, $y$, $y'$.

Therefore, $z=0$, i.e., the number $\l$ has the required form.
\end{proof}

\begin{thm}\label{thm:2}
For a rational number $\l>0$ and an integer $q\ge 2$, the similarity transformation $H_\l$ keeps the space $X_q=\{q^n:n\in\Z\}$ in the same cloud, i.e., $\l\in\L_{X_q}\cap\Q$, if and only if $\l=q^\a$ for some $\a\in\Z$.
\end{thm}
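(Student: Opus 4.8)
My plan: dispose of the ``if'' direction trivially, and prove ``only if'' by a direct analysis of correspondences, establishing in fact the stronger statement that $d_{GH}(X_q,\l X_q)=\infty$ whenever $\l>0$ is not an integral power of $q$.

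The ``if'' direction is immediate: if $\l=q^\a$ with $\a\in\Z$ then, as subsets of $\R$, $\l X_q=\{q^{n+\a}:n\in\Z\}=\{q^m:m\in\Z\}=X_q$, so $d_{GH}(X_q,\l X_q)=0$; since moreover $q^\a\in\Q$, we get $\l\in\L_{X_q}\cap\Q$. For the ``only if'' direction I will prove the stronger, purely metric assertion: \emph{if a real number $\l>0$ is not of the form $q^\a$ with $\a\in\Z$, then $d_{GH}(X_q,\l X_q)=\infty$} (so the rationality of $\l$ is not actually needed here). Suppose, to the contrary, that some $R\in\cR(X_q,\l X_q)$ has $D:=\dis R<\infty$. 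Write $X_q=\{q^i:i\in\Z\}$ and $\l X_q=\{\l q^i:i\in\Z\}$, and fix once and for all an exponent $\psi_0\in\Z$ with $(q^0,\l q^{\psi_0})\in R$ (such a pair exists because $R$ is a correspondence).

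The first step is to show that correspondents of large powers are large powers. If $(q^i,\l q^{i'})\in R$, then applying $\dis R\le D$ to this pair together with the fixed pair $(q^0,\l q^{\psi_0})$ gives, for $i\ge1$,
$$
\bigl|\,(q^i-1)-\l\,|q^{i'}-q^{\psi_0}|\,\bigr|\le D,\qquad\text{hence}\qquad \l\,|q^{i'}-q^{\psi_0}|\ \ge\ q^i-1-D.
$$
Since $|q^{i'}-q^{\psi_0}|<q^{\max(i',\psi_0)}$ while the right-hand side tends to $\infty$, for all large $i$ this forces $i'>\psi_0$, and then $q^{i'}>|q^{i'}-q^{\psi_0}|\ge\l^{-1}(q^i-1-D)$, i.e.\ $i'>\log_q\bigl(\l^{-1}(q^i-1-D)\bigr)$ --- a lower bound depending on $i$ alone and tending to $\infty$. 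Consequently, if for each large $i$ we select some correspondent $\l q^{\psi(i)}\in R(q^i)$, then $\psi(i)\to\infty$ as $i\to\infty$.

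Now fix $i$ large enough that $i\ge1$ and $\psi(i)>\psi_0$. Then $|q^i-1|=q^i-1$ and $|q^{\psi(i)}-q^{\psi_0}|=q^{\psi(i)}-q^{\psi_0}$, so applying $\dis R\le D$ to the pairs $(q^i,\l q^{\psi(i)})$ and $(q^0,\l q^{\psi_0})$ gives
$$
\bigl|\,(q^i-1)-\l\,(q^{\psi(i)}-q^{\psi_0})\,\bigr|\le D,\qquad\text{equivalently}\qquad \l q^{\psi(i)}=q^i+K+\e_i,
$$
where $K:=\l q^{\psi_0}-1$ is a constant and $|\e_i|\le D$. Dividing by $\l q^i$ we obtain $q^{\psi(i)-i}=\l^{-1}\bigl(1+(K+\e_i)q^{-i}\bigr)\longrightarrow\l^{-1}$ as $i\to\infty$. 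But $\psi(i)-i\in\Z$, and $\{q^j:j\in\Z\}$ is a closed discrete subset of $(0,\infty)$ (a convergent integer sequence is eventually constant, hence so is a convergent sequence drawn from $\{q^j\}$). Therefore $\l^{-1}=q^{-j}$ for some $j\in\Z$, i.e.\ $\l=q^{j}$ --- contradicting the assumption on $\l$. Hence $d_{GH}(X_q,\l X_q)=\infty$, which completes the proof.

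The one point requiring genuine care is the ``large correspondents'' step of the second paragraph: the lower bound on the exponent of a correspondent of $q^i$ must be uniform in the choice of correspondent, i.e.\ a function of $i$ only, so that it can afterwards be combined with the single fixed exponent $\psi_0$; everything else is bookkeeping. By way of contrast, Corollary~\ref{cor:lambda-in-Q} yields only the weaker necessary condition $\l=q^{t}\frac{q^{a}-1}{q^{b}-1}$ for suitable integer exponents, which does not by itself force $\l$ to be a power of $q$ (for instance $q=2$, $\l=3=\frac{2^2-1}{2-1}$ satisfies it), so a direct analysis of correspondences, as above, is the natural route.
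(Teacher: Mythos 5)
Your proof is correct, and it takes a genuinely different route from the paper's. The paper stays within the rational/integer setting: it first derives the necessary condition of Corollary~\ref{cor:lambda-in-Q} via a divisibility argument (which is where rationality of $\l$ and integrality of $q$ enter), then classifies the solutions of $a_1(q^n-1)=a_2(q^m-1)$ using $\gcd(q^n-1,q^m-1)=q^{\gcd(n,m)}-1$ to reach the form \eqref{eq:10} of Proposition~\ref{prop:6}, and finally — precisely because, as you note, \eqref{eq:10} alone does not force $\l\in q^{\Z}$ — invokes the multiplicative group structure of the stabilizer, applying Proposition~\ref{prop:7} to $\l^2$ to finish. You instead analyze a single finite-distortion correspondence $R\in\cR(X_q,\l X_q)$ directly: anchoring at one pair $(q^0,\l q^{\psi_0})$, showing correspondents of large powers have large exponents (with the crucial uniformity in the choice of correspondent, which you rightly flag), extracting $q^{\psi(i)-i}\to\l^{-1}$, and concluding by discreteness of $\{q^j:j\in\Z\}$ in $(0,\infty)$. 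This buys a strictly stronger theorem: $d_{GH}(X_q,\l X_q)=\infty$ for \emph{every} real $\l>0$ outside $q^{\Z}$, and indeed nothing in your argument uses integrality of $q$, so it applies to any real $q>1$ as well — whereas the paper's method is confined to $\L_{X_q}\cap\Q$ with $q\in\N$. What the paper's route buys in exchange is the reusable number-theoretic apparatus (the gcd identity, the classification of solutions of~\eqref{eq:4}, and the squaring trick for stabilizers), which feeds the broader program around Corollary~\ref{cor:lambda-in-Q} for general $X_\v$; your asymptotic method is the more economical and more general proof of Theorem~\ref{thm:2} itself.
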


We split the proof into several elementary steps some of which are undoubtedly well known and presented here for completeness and consistency.

For a given positive integers $n$, $m$, and an integer $q\geq 2$, we describe all solutions of the equation
\begin{equation}\label{eq:4}
a_1(q^n-1)=a_2(q^m-1)
\end{equation}
in positive integer coprime numbers $a_1$, $a_2$.

\begin{prop}
For any integer number $q\ge2$, we have
\begin{equation}
\label{eq:5}
\gcd(q^n-1,q^m-1)=q^{\gcd(n,m)}-1.
\end{equation}
\end{prop}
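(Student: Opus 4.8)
The plan is to establish the equality via two opposite divisibilities: $q^{\gcd(n,m)}-1$ divides $\gcd(q^n-1,q^m-1)$, and $\gcd(q^n-1,q^m-1)$ divides $q^{\gcd(n,m)}-1$. Since $q\ge 2$ and $n,m\ge 1$, every quantity in sight is a positive integer, so two such divisibilities force equality. The only elementary input needed is the identity $q^a-1\mid q^{ab}-1$ for positive integers $a,b$, which follows at once from $q^{ab}-1=(q^a-1)\bigl(q^{a(b-1)}+\cdots+q^a+1\bigr)$.

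Write $g=\gcd(n,m)$. Since $g\mid n$ and $g\mid m$, the identity above gives $q^g-1\mid q^n-1$ and $q^g-1\mid q^m-1$; being a common divisor, $q^g-1$ then divides $\gcd(q^n-1,q^m-1)$. This is the first divisibility.

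For the second --- the main point --- set $D=\gcd(q^n-1,q^m-1)$, a positive integer. First, $\gcd(q,D)=1$, since any prime dividing both $q$ and $D$ would divide both $q^n$ and $q^n-1$, hence $1$. So the multiplicative order $d$ of $q$ modulo $D$ is well defined, i.e. the least positive integer with $D\mid q^d-1$. From $D\mid q^n-1$ the usual division-with-remainder argument on exponents gives $d\mid n$; likewise $d\mid m$; therefore $d\mid g$. Applying the identity once more, $q^d-1\mid q^g-1$, and since $D\mid q^d-1$ we get $D\mid q^g-1$. Combining the two divisibilities completes the proof.

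I do not anticipate a real obstacle: the argument is standard, and the points to watch are merely that $D\ge 1$ (so that ``order modulo $D$'' makes sense), the bookkeeping of the ``$d\mid n$'' step (if $n=dc+t$ with $0\le t<d$, then $D\mid q^n-1$ and $D\mid q^{dc}-1$ force $D\mid q^t-1$, so minimality of $d$ gives $t=0$), and the positivity needed to pass from mutual divisibility to equality. As an alternative one can avoid multiplicative orders entirely and run the Euclidean algorithm directly on the exponents: if $n=am+t$ with $0\le t<m$, then $q^n-1=q^t(q^{am}-1)+(q^t-1)$ together with $q^m-1\mid q^{am}-1$ yields $\gcd(q^n-1,q^m-1)=\gcd(q^t-1,q^m-1)$, which mirrors $\gcd(n,m)=\gcd(t,m)$ and terminates at $\gcd(q^g-1,0)=q^g-1$.
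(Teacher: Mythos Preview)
Your argument is correct. Your primary route, via the multiplicative order of $q$ modulo $D=\gcd(q^n-1,q^m-1)$, is genuinely different from the paper's. The paper proceeds by induction on $\max(n,m)$, using the single-step subtraction
\[
\gcd(q^n-1,q^m-1)=\gcd\bigl(q^n-1-q^{n-m}(q^m-1),\,q^m-1\bigr)=\gcd(q^{n-m}-1,q^m-1),
\]
which mirrors $\gcd(n,m)=\gcd(n-m,m)$; this is precisely the subtractive form of the ``alternative'' you sketch at the end. What your order-based argument buys is a clean conceptual reason for the second divisibility (the order of $q$ mod $D$ is forced to divide $g$), at the cost of invoking a small amount of group-theoretic language and needing $\gcd(q,D)=1$. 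The paper's approach is more self-contained and purely arithmetical, requiring only the identity $q^n-1-q^{n-m}(q^m-1)=q^{n-m}-1$. One small quibble: your remark that ``$D\ge 1$ so that order modulo $D$ makes sense'' is not quite right --- order modulo $1$ is degenerate --- but the case $D=1$ forces $q^g-1=1$ via your first divisibility, so the equality is immediate there anyway.
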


\begin{proof}
We use induction on the number $\max(n,m)$. If $n=m$, then the Equality~(\ref{eq:5}) is obvious. If $n>m$, then the inductive transition is given by the chain of equalities
\begin{multline}
\label{eq:6}
\gcd(q^n-1,q^m-1)=\gcd(q^n-1-q^{n-m}(q^m-1),q^m-1)= \\
=\gcd(q^{n-m}-1,q^m-1)=q^{\gcd (n-m,m)}-1=q^{\gcd (n,m)}-1.
\end{multline}
\end{proof}

\begin{cor}\label{cor:4}
If the positive integers $n$ and $m$ are coprime, then for every integer $q\geq 2$, the numbers
\begin{equation}
\label{eq:7}
q^{n-1}+q^{n-2}+\ldots +q+1\text{ and }q^{m-1}+q^{m-2}+\ldots+q+1 \\
\end{equation}
are also coprime.
\end{cor}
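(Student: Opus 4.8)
The plan is to reduce the statement to the Proposition just proved. Write $A=q^{n-1}+q^{n-2}+\dots+q+1$ and $B=q^{m-1}+q^{m-2}+\dots+q+1$, so that by the geometric series identity $q^n-1=(q-1)A$ and $q^m-1=(q-1)B$. First I would invoke the Proposition with $\gcd(n,m)=1$: by \eqref{eq:5} this gives
$$
\gcd(q^n-1,\,q^m-1)=q^{\gcd(n,m)}-1=q-1.
$$
Next I would substitute the factorizations to rewrite the left-hand side as $\gcd\bigl((q-1)A,\,(q-1)B\bigr)$, and then use the elementary fact that $\gcd(cA,cB)=c\,\gcd(A,B)$ for a positive integer $c$ (here $c=q-1\ge 1$). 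This yields $(q-1)\gcd(A,B)=q-1$, and cancelling the positive factor $q-1$ gives $\gcd(A,B)=1$, which is exactly the claim.

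There is essentially no obstacle here: the content is entirely carried by the Proposition, and the only thing to verify is the passage from $\gcd(q^n-1,q^m-1)=q-1$ to coprimality of the two ``repunit-type'' factors, which is the one-line cancellation above. One minor point worth stating explicitly is that $q\ge 2$ ensures $q-1\ge 1$, so the cancellation is legitimate; and when $q=2$ the statement is the trivial assertion that both numbers are odd... actually both numbers are then $2^{n-1}+\dots+1$ and $2^{m-1}+\dots+1$, and the argument still applies with $c=1$, giving the (vacuously consistent) conclusion directly. Thus the corollary follows immediately.

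\begin{proof}
Set $A=q^{n-1}+\dots+q+1$ and $B=q^{m-1}+\dots+q+1$, so that $q^n-1=(q-1)A$ and $q^m-1=(q-1)B$. Since $\gcd(n,m)=1$, the Proposition gives
$$
(q-1)\gcd(A,B)=\gcd\bigl((q-1)A,(q-1)B\bigr)=\gcd(q^n-1,q^m-1)=q^{\gcd(n,m)}-1=q-1.
$$
As $q\ge 2$, we may cancel the positive factor $q-1$ to obtain $\gcd(A,B)=1$.
\end{proof}
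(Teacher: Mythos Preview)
Your proof is correct and is exactly the intended derivation: the paper states the corollary without proof, as an immediate consequence of the preceding Proposition, and your one-line cancellation of the common factor $q-1$ is precisely how that consequence is extracted. (Your aside about $q=2$ is unnecessary --- the argument works uniformly, with $c=q-1=1$ giving $\gcd(A,B)=1$ directly, not vacuously.)
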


\begin{thm}
For given positive integers $n$, $m$, and an integer $q\ge2$, positive coprime integers $a_1$, $a_2$
solves the Equation~$(\ref{eq:4})$ if and only if
\begin{equation}\label{eq:8}
\begin{cases}
a_1=q^{(r_m-1)d}+q^{(r_m-2)d}+\ldots +q^d+1, \\
a_2=q^{(r_n-1)d}+q^{(r_n-2)d}+\ldots +q^d+1,
\end{cases}
\end{equation}
where $d=\gcd(n,m)$, $r_n=n/d$, and $r_m=m/d$.
\end{thm}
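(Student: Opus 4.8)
The plan is to reduce the Diophantine equation $a_1(q^n-1) = a_2(q^m-1)$ to the coprimality statement already isolated in Corollary~\ref{cor:4}. Write $d=\gcd(n,m)$, $r_n = n/d$, $r_m = m/d$, and set $t = q^d$, so that $q^n - 1 = t^{r_n}-1$ and $q^m - 1 = t^{r_m}-1$. The idea is to factor out the common part: by Proposition~(\ref{eq:5}) applied with base $t$ and exponents $r_n, r_m$ (which are coprime), we have $\gcd(t^{r_n}-1, t^{r_m}-1) = t^{\gcd(r_n,r_m)} - 1 = t - 1 = q^d - 1$. Hence
\begin{equation*}
q^n - 1 = (q^d-1)\,B_n, \qquad q^m - 1 = (q^d-1)\,B_m,
\end{equation*}
where $B_n = t^{r_n - 1} + \cdots + t + 1 = q^{(r_n-1)d} + \cdots + q^d + 1$ and likewise $B_m$, and $\gcd(B_n, B_m) = 1$ by Corollary~\ref{cor:4} (with base $t$ and coprime exponents $r_n, r_m$).

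Next I would analyze the equation in this factored form. Cancelling $q^d - 1 > 0$ from both sides, the Equation~(\ref{eq:4}) becomes equivalent to $a_1 B_n = a_2 B_m$. Now $a_1 B_n = a_2 B_m$ together with $\gcd(a_1, a_2) = 1$ forces $a_2 \mid B_n$; together with $\gcd(B_n, B_m) = 1$ and $a_1 \mid a_2 B_m$ one gets $a_1 \mid B_m$. Writing $B_n = a_2 c$ and $B_m = a_1 c'$, substitution gives $a_1 a_2 c = a_2 a_1 c'$, so $c = c'$, and then $a_1 B_n = a_2 B_m$ reads $a_1 a_2 c = a_1 a_2 c$ — consistent for any common cofactor $c$. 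But since $\gcd(B_n, B_m) = 1$, the common cofactor $c$ must divide $\gcd(B_n, B_m) = 1$, hence $c = 1$. Therefore $a_2 = B_n$ and $a_1 = B_m$, which is exactly the system~(\ref{eq:8}) after matching indices (note the swap: $a_1$ is built from $r_m$, $a_2$ from $r_n$, since $a_1$ multiplies $q^n - 1$). Conversely, if $a_1 = B_m$ and $a_2 = B_n$, then $a_1(q^n-1) = B_m (q^d-1) B_n = B_n(q^d-1)B_m = a_2(q^m-1)$, so~(\ref{eq:4}) holds; and coprimality of $a_1, a_2$ is precisely Corollary~\ref{cor:4}.

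I would organize the write-up as: (1) the factorization identity via Proposition~(\ref{eq:5}) with base $q^d$; (2) the coprimality $\gcd(B_n, B_m) = 1$ from Corollary~\ref{cor:4}; (3) the elementary number-theoretic deduction that $a_1 B_n = a_2 B_m$ with $\gcd(a_1,a_2)=\gcd(B_n,B_m)=1$ forces $a_1 = B_m$, $a_2 = B_n$; (4) the converse check. The main obstacle — though it is more bookkeeping than genuine difficulty — is keeping the index roles of $a_1$ versus $a_2$ straight (which of them pairs with $n$ and which with $m$), and making sure Corollary~\ref{cor:4} is being invoked with the correct base: it must be applied with base $t = q^d$ and coprime exponents $r_n = n/d$, $r_m = m/d$, not with base $q$ and exponents $n, m$. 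Once that is set up cleanly, the rest is the standard ``cross-multiplication with coprimality'' argument.
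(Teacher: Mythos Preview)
Your proposal is correct and follows essentially the same route as the paper: factor out $q^d-1$ via $x^r-1=(x-1)(x^{r-1}+\cdots+1)$ with $x=q^d$, reduce Equation~(\ref{eq:4}) to $a_1B_n=a_2B_m$, and then use the coprimality of $B_n,B_m$ from Corollary~\ref{cor:4} together with $\gcd(a_1,a_2)=1$ to force $a_1=B_m$, $a_2=B_n$. The paper's proof is simply a terser version of what you wrote; your explicit bookkeeping of the cross-multiplication step and the converse is a welcome elaboration but not a different idea.
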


\begin{proof}
Substitute $x=q^d$ into the well-known equality
$$
x^r-1=(x-1)(x^{r-1}+\ldots+x+1)
$$
with $r=n/d$ and $r=m/d$. Thus, according to Equation~(\ref{eq:5}), the Equation~(\ref{eq:4}) can be rewritten as
\begin{equation}
\label{eq:9}
a_1(q^{(r_n-1)d}+q^{(r_n-2)d}+\ldots +q^{d}+1)=a_2(q^{(r_m-1)d}+q^{(r_m-2)d}+\ldots +q^{d}+1).
\end{equation}
From Corollary~\ref{cor:4} and the condition $\gcd(a_1,a_2)=1$ the required equalities follow.
\end{proof}

\begin{prop}\label{prop:6}
For a rational number $\l>0$ and an integer $q\ge2$, if the similarity transformation $H_\l$ keeps the space $X_q=\{q^n:n\in\Z\}$ in the same cloud, i.e., if $\l\in\L_{X_q}\cap\Q$, then
\begin{equation}\label{eq:10}
\l=q^\a\frac{1+q^d+\ldots+q^{(r_2-2)d}+q^{(r_2-1)d}}{1+q^d+\ldots+q^{(r_1-2)d}+q^{(r_1-1)d}}=
q^\a\frac{q^{r_2d}-1}{q^{r_1d}-1}
\end{equation}
for some integer $\a$, positive integer $d$, and coprime positive integers $r_1$ and $r_2$.
\end{prop}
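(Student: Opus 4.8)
The plan is to read off the asserted shape of $\l$ directly from Corollary~\ref{cor:lambda-in-Q} by a short base-$q$ manipulation. As a preliminary remark, the stabilizer is a cloud invariant: by Proposition~\ref{prop:AllGoodL} the set $\L_X$ depends only on the cloud of $X$, and the two models $\{q^n:n\in\Z\}$ and $\{q^n:n\in\N\}$ of the space $X_q$ lie in one cloud, so $\L_{X_q}=\L_{X_\v}$ for the function $\v(n)=n$. Since $q\ge2$ is an integer, in particular $q\in\N$, Corollary~\ref{cor:lambda-in-Q} applies with this $\v$: from $\l\in\L_{X_q}\cap\Q$ we get positive integers $n,k,m,l$ with $n>k$, $m>l$ and
\[
\l=\frac{q^{n}-q^{k}}{q^{m}-q^{l}}.
\]

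The next step is to pull the least power of $q$ out of the numerator and out of the denominator. Putting $N=n-k\ge1$, $P=m-l\ge1$ and $\a=k-l\in\Z$, the equality above becomes $\l=q^{\a}(q^{N}-1)/(q^{P}-1)$. Now set $d=\gcd(N,P)$, $r_2=N/d$ and $r_1=P/d$; then $d$ is a positive integer, $r_1$ and $r_2$ are positive integers, $\gcd(r_1,r_2)=\gcd(N,P)/d=1$, and $\l=q^{\a}(q^{r_2 d}-1)/(q^{r_1 d}-1)$, which is the second expression in~(\ref{eq:10}). Substituting $x=q^{d}$ into $x^{r}-1=(x-1)(x^{r-1}+\cdots+x+1)$ with $r=r_1$ and $r=r_2$ rewrites this as
\[
\l=q^{\a}\,\frac{1+q^{d}+\cdots+q^{(r_2-2)d}+q^{(r_2-1)d}}{1+q^{d}+\cdots+q^{(r_1-2)d}+q^{(r_1-1)d}},
\]
the first expression in~(\ref{eq:10}); thus $\a$, $d$, $r_1$, $r_2$ have all the properties required by the statement.

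I do not anticipate a genuine obstacle within this argument: once Corollary~\ref{cor:lambda-in-Q} is invoked, everything reduces to elementary bookkeeping, the only delicate points being the transfer from the $\N$-model of $X_q$ to its $\Z$-model (supplied precisely by Proposition~\ref{prop:AllGoodL}) and keeping track of the sign of the exponent $\a=k-l$. The substantive work lies beyond this proposition: upgrading the necessary form~(\ref{eq:10}) to the exact equality $\l=q^{\a}$ of Theorem~\ref{thm:2} will rely on the arithmetic of the base-$q$ ``repunits'' in~(\ref{eq:7}) --- their pairwise coprimality (Corollary~\ref{cor:4}) together with the gcd identity~(\ref{eq:5}) --- and on the freedom to choose $M$ in Corollary~\ref{cor:lambda-in-Q} arbitrarily large.
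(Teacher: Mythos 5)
Your proposal is correct and follows essentially the same route as the paper: invoke Corollary~\ref{cor:lambda-in-Q} to write $\l=\frac{q^n-q^k}{q^m-q^l}=q^{k-l}\frac{q^{n-k}-1}{q^{m-l}-1}$ and then read off the form~(\ref{eq:10}) by extracting $d=\gcd(n-k,m-l)$. The paper's proof is in fact just this one-line computation, leaving the gcd normalization and the model transfer from $\{q^n:n\in\N\}$ to $\{q^n:n\in\Z\}$ implicit, both of which you justify explicitly.
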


\begin{proof}
By Corollary~\ref{cor:lambda-in-Q}, there exist $n>k$ and $m>l$ such that
$\l=\frac{q^n-q^k}{q^m-q^l}= q^{k-l}\frac{q^{n-k}-1}{q^{m-l}-1}$.
\end{proof}

\begin{prop}\label{prop:7}
If the square of the number $\l=\frac{q^n-1}{q^m-1}$ looks like the right hand side of Equation~$(\ref{eq:10})$, then $\l=1$.
\end{prop}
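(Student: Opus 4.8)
The plan is to strip off the power of $q$, reduce the hypothesised identity to one asserting that the square of a ``generalized repunit'' $1+q^e+q^{2e}+\cdots$ is again a generalized repunit, and then exclude that by a congruence argument modulo a power of $q$. First, since $q\ge2$, every $q^a-1$ with $a\ge1$ is coprime to $q$, so $\l^2=\frac{(q^n-1)^2}{(q^m-1)^2}$ has numerator and denominator prime to $q$, whereas the reduced form of $q^\a\frac{q^{r_2d}-1}{q^{r_1d}-1}$ carries exactly the factor $q^\a$; hence $\a=0$ and $\l^2=\frac{q^{r_2d}-1}{q^{r_1d}-1}$ with $\gcd(r_1,r_2)=1$. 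Since replacing $\l$ by $\l^{-1}$ preserves both hypotheses, I may assume $\l\ge1$ and, for contradiction, $\l>1$, so that $n>m\ge1$ and $r_2>r_1\ge1$.

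Next I pass to lowest terms. With $e=\gcd(n,m)$, formula~(\ref{eq:5}) and Corollary~\ref{cor:4} give $\l=A/B$ reduced, $A=\frac{q^n-1}{q^e-1}$, $B=\frac{q^m-1}{q^e-1}$, $\gcd(A,B)=1$, hence $\l^2=A^2/B^2$ is reduced; and since $\gcd(r_1d,r_2d)=d$, Corollary~\ref{cor:4} likewise gives $\l^2=C/D$ reduced with $C=\frac{q^{r_2d}-1}{q^d-1}$. Uniqueness of the reduced form yields $A^2=C$. As $e\mid m<n$ we have $n\ge2e$, so $A=1+q^e+q^{2e}+\cdots+q^{n-e}$ is a generalized repunit with at least two terms while $C=1+q^d+q^{2d}+\cdots+q^{(r_2-1)d}$, and the identity reads
$$\bigl(1+q^e+q^{2e}+\cdots+q^{n-e}\bigr)^2=1+q^d+q^{2d}+\cdots+q^{(r_2-1)d}.$$

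For $q\ge3$ this is impossible. The only monomials of degree $<2e$ on the left are $1$ and $2q^e$, so (using $2e\ge e+1$ and $2<q$) the left side leaves the residue $1+2q^e$ modulo $q^{e+1}$, with $q^{e+1}\nmid 2q^e$. If $r_2=1$ the right side is $1<(1+q^e)^2$, absurd; if $r_2\ge2$ the right side leaves the residue $1+q^d$ modulo $q^{d+1}$ with $q^{d+1}\nmid q^d$. Reducing both sides modulo $q^{\min(d,e)+1}$ in the three cases $d<e$, $d=e$, $d>e$ forces, respectively, $q^{d+1}\mid q^d$, $q^{e+1}\mid q^e$, or $q^{e+1}\mid 2q^e$ (the last using $q\nmid2$) --- all impossible. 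Hence $\l>1$ cannot hold, so $\l=1$.

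The step I expect to be the genuine obstacle is this last congruence argument in the degenerate case $q=2$, where the inequality $2<q$ breaks down: there the square of a generalized repunit really can be a generalized repunit, e.g. $\bigl(\tfrac{2^2-1}{2-1}\bigr)^2=9=\tfrac{2^6-1}{2^3-1}$. So $q=2$ must be handled separately --- presumably by also using that the $\l$ arising in the intended application lies in the multiplicative group $\L_{X_q}$, a piece of information the purely arithmetic hypothesis of the proposition does not carry.
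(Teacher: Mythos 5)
Your argument for $q\ge3$ is correct and complete: the normalization $\a=0$ (both sides of the identity must have $q$-adic valuation zero), the passage to lowest terms via Equation~(\ref{eq:5}) and Corollary~\ref{cor:4} yielding the integer identity $A^2=C$ between generalized repunits, and the comparison of the two sides modulo $q^{\min(d,e)+1}$ are all sound. This is a genuinely different and, in my view, cleaner route than the paper's, which instead cross-multiplies to get $(q^n-1)^2(q^l-1)=(q^m-1)^2(q^k-1)$, expands into six-term sums, and compares divisibility by powers of $q$ directly; both proofs are at bottom valuation arguments whose single delicate point is that the coefficient $2$ produced by squaring must not be divisible by $q$. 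Your version isolates that point explicitly, which the paper's does not.

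More importantly, the obstacle you flag at $q=2$ is not a gap in your write-up but an actual counterexample to the proposition as stated: for $q=2$ and $\lambda=\frac{2^2-1}{2^1-1}=3$ one has $\lambda^2=9=\frac{2^6-1}{2^3-1}$, which is of the form~(\ref{eq:10}) with $\a=0$, $d=3$, $r_1=1$, $r_2=2$ coprime, yet $\lambda\ne1$. Correspondingly, the paper's own proof breaks there: in its expanded identity the step ``from divisibility by the number $q$ it follows that $m=l$'' tacitly uses that the monomial $2q^{m}$ has $q$-adic valuation exactly $m$, which fails when $q=2$ (and indeed in the example above $m=1$ while $l=3$). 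So Proposition~\ref{prop:7} needs the hypothesis $q\ge3$, and the proof of Theorem~\ref{thm:2} acquires a genuine gap at $q=2$ that squaring alone cannot close; one would have to exploit more of the group structure of $\Lambda_{X_q}$ (e.g.\ higher powers $\lambda^{2^j}$, all of which must again have the form~(\ref{eq:10})) or decide directly whether $3\in\Lambda_{X_2}$. Your closing remark points in exactly the right direction.
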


\begin{proof}
Without loss of generality, we can assume that $n>m$. Suppose that there exist positive integers $k$ and $l$ (with $k>l$) such that
$$
\l^2=\frac{(q^n-1)^2}{(q^m-1)^2}=\frac{q^k-1}{q^l-1}.
$$
Then
$$
(q^n-1)^2(q^l-1)=(q^m-1)^2(q^k-1),
$$
i.e.,
$$
q^{2n+l}-2q^{n+l}+q^{l}-q^{2n}+2q^{n}-1=q^{2m+k}-2q^{m+k}+q^{k}-q^{2m}+2q^{m}-1.
$$
From divisibility by the number $q$, it follows that $m=l$. After reduction, the following condition is obtained:
$$
q^{2n}-2q^n+1-q^{2n-m}+2q^{n-m}=q^{m+k}-2q^k+q^{k-l}-q^m+2
$$
Under the assumptions $n>m$ and $k>l$, it turns out that $1$ must be divisible by $q$.
\end{proof}

\begin{proof}[Proof of Theorem~$\ref{thm:2}$]
By Proposition~\ref{prop:6}, all rational numbers in the stabilizer are of the form~(\ref{eq:10}). Since the stabilizer is a multiplicative group, the square of any number from the stabilizer has the form~(\ref{eq:10}). It follows from Proposition~\ref{prop:7} that $\l$ has the required form.
\end{proof}

\begin{thm}
If a function $\v\:\N\to\R$ satisfies $\v(n+1)-\v(n)\to\infty$ as $n\to\infty$, then for any real numbers $q>1$ and $\l>0$, the similarity transformation $H_\l$ keeps the space $X_\v=\{q^{\v (n)}:n\in\N\}$ in the same cloud if and only if $\l=1$.
\end{thm}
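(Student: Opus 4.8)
The plan is to obtain the nontrivial implication from Theorem~\ref{thm:unbounded-spaces-suff-infinity}. The direction ``$\l=1\Rightarrow$ same cloud'' is trivial, since $\l X_\v=X_\v$ gives $d_{GH}(X_\v,\l X_\v)=0$. Conversely fix $\l\ne1$, put $X=X_\v$ and $Y=\l X_\v$ (realized inside $\R$ as $\{\l q^{\v(n)}:n\in\N\}$), and choose base points $x_0=q^{\v(1)}$, $y_0=\l q^{\v(1)}$. Both $X$ and $Y$ are unbounded, since $\v(n)\ge n$ and $q>1$ force $q^{\v(n)}\to\infty$; hence it is enough to show that the quantity $\D(r)$ of Theorem~\ref{thm:unbounded-spaces-suff-infinity} satisfies $\D(r)\to\infty$ as $r\to\infty$, for then $d_{GH}(X_\v,\l X_\v)=\infty$, i.e. $\l\notin\L_{X_\v}$, which is precisely the claim.

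So fix $r>0$ and take arbitrary $x=q^{\v(n)},x'=q^{\v(k)}$ in $X\sm U_r(x_0)$ with $x\ne x'$ and $y=\l q^{\v(m)},y'=\l q^{\v(l)}$ in $Y\sm U_r(y_0)$ with $y\ne y'$. By monotonicity of $\v$ we may relabel so that $n>k$ and $m>l$, whence $|xx'|=q^{\v(n)}-q^{\v(k)}$ and $|yy'|=\l\bigl(q^{\v(m)}-q^{\v(l)}\bigr)$. The requirement $\min\{|x_0x|,|x_0x'|,|y_0y|,|y_0y'|\}\ge r$ makes $q^{\v(n)},q^{\v(k)},q^{\v(m)},q^{\v(l)}$ all large, so $s:=\min\{n,k,m,l\}\to\infty$ as $r\to\infty$; moreover, since $\v(n+1)-\v(n)\to\infty$, for $s$ large the function $\v$ is strictly increasing on $\{s,s+1,\dots\}$ and $q^{\v(j)-\v(i)}\le q^{-(\v(i)-\v(i-1))}\to0$ whenever $j<i$ lie in that range. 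It now suffices to prove that
\[
\dl:=|xx'|-|yy'|=\bigl(q^{\v(n)}-q^{\v(k)}\bigr)-\l\bigl(q^{\v(m)}-q^{\v(l)}\bigr)
\]
has $|\dl|\to\infty$ as $s\to\infty$, uniformly over all admissible quadruples $(n,k,m,l)$; this gives $\D(r)\to\infty$ and completes the proof.

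The uniform bound is a short three-case estimate according to the ordering of $n$ and $m$. If $n>m$ then $n>k,m,l$, so $q^{\v(n)}$ dominates every term, and dividing $\dl$ by $q^{\v(n)}$ leaves $1-q^{\v(k)-\v(n)}-\l q^{\v(m)-\v(n)}+\l q^{\v(l)-\v(n)}$, whose last three terms sum to $<\tfrac12$ once $s$ is large; hence $|\dl|\ge\tfrac12 q^{\v(n)}$. If $m>n$ the symmetric computation (factor out $\l q^{\v(m)}$) gives $|\dl|\ge\tfrac12\l q^{\v(m)}$ for $s$ large. If $n=m$ then $\dl=(1-\l)q^{\v(n)}-q^{\v(k)}+\l q^{\v(l)}$, and dividing by $q^{\v(n)}$ produces a number lying within $\tfrac12|1-\l|$ of $1-\l$ once $s$ is large, so $|\dl|\ge\tfrac12|1-\l|\,q^{\v(n)}$. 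In every case the dominant exponent is $\v(j)$ with $j\ge s$, and since $\v$ increases with $\v(s)\ge s$ we get $|\dl|\ge c\,q^{s}$ with $c:=\tfrac12\min\{1,\l,|1-\l|\}>0$, valid once $s$ exceeds a threshold depending only on $\l$ (through the sequence $\v(n)-\v(n-1)$). Thus $|\dl|\to\infty$ uniformly. The only point requiring real care is this uniformity --- exhibiting one $r$ that lifts $|\dl|$ above a prescribed bound for all admissible quadruples simultaneously --- and it follows because the three cases are exhaustive and each estimate depends on the quadruple only through $s$; everything else is immediate from Theorem~\ref{thm:unbounded-spaces-suff-infinity}.
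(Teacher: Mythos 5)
Your proof is correct and follows essentially the same route as the paper: both arguments reduce, via Theorem~\ref{thm:unbounded-spaces-suff-infinity}, to showing that $\dl=\bigl(q^{\v(n)}-q^{\v(k)}\bigr)-\l\bigl(q^{\v(m)}-q^{\v(l)}\bigr)$ tends to infinity uniformly, split into the cases $n>m$, $n<m$, $n=m$, using $\v(n+1)-\v(n)\to\infty$ to kill the subdominant terms. If anything, your write-up is the more self-contained one: the paper first treats integer $q$ and rational $\l$ separately through Corollary~\ref{cor:lambda-in-Q} and then writes out only the case $n=m$ of the general estimate, whereas you carry out all three cases with explicit uniform lower bounds on $|\dl|$.
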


\begin{proof}
Let $\l\ne1$. Suppose that the number $q$ is an integer, and the number $\l$ is rational. By Corollary~\ref{cor:lambda-in-Q}, there exist $n>k>M$ and $m>l>M$ such that $\l=\frac{q^{\v (n)}-q^{\v (k)}}{q^{\v (m)}-q^{\v (l)}}$.

If $n=m$, then
$$
\l=\frac{1-q^{\v(k)-\v(n)}}{1-q^{\v(l)-\v(m)}}\stackrel{M\to\infty}{\longrightarrow}1.
$$
If $M$ is sufficiently large, we get a contradiction with $\l\ne 1$.

If $n<m$, then
$$
\l=\frac{1-q^{\v(k)-\v(n)}}{q^{\v(m)-\v(n)}-q^{\v(l)-\v(n)}}\stackrel{M\to\infty}{\longrightarrow}0.
$$
If $M$ is sufficiently large, we get a contradiction with $\l>0$.

If $n>m$, then
$$
\l=\frac{q^{\v(n)-\v(m)}-q^{\v(k)-\v(m)}}{1-q^{\v(l)-\v(m)}}\stackrel{M\to\infty}{\longrightarrow}\infty.
$$
If $M$ is sufficiently large, we get a contradiction with $\l<\infty$.

If $q>1$ and $\l>0$ are arbitrary, one needs to return to the proof of Corollary~\ref{cor:lambda-in-Q}. Let us show that for $n>k>M$, $m>l>M$, and $\l\ne1$, the distortion tend to infinity. Here again there are three cases to consider. If $n=m$, then
$$
\dl=q^{\v(n)}-q^{\v(k)}-\l(q^{\v(m)}-q^{\v(l)})=
(1-\l)q^{\v(n)}\bigl(1-\frac{1}{1-\l}q^{\v(k)-\v(n)}+\frac{\l }{1-\l}q^{\v(l)-\v(n)}\bigr)\to\pm\infty.
$$
The cases $n<m$ and $n>m$ can be considered in the same way.
\end{proof}

\begin{cor}\label{cor:kvadrat-n}
If $\v(n)=n^2$, then for any real numbers $q>1$ and $\l>0$, the similarity transformation $H_\l$ keeps the space $X_\v=\{q^{n^2}:n\in\N\}$ in the same cloud if and only if $\l=1$.
\end{cor}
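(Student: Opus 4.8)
The plan is to derive this as an immediate special case of the preceding theorem, the one asserting that whenever $\v\:\N\to\R$ satisfies $\v(n+1)-\v(n)\to\infty$ as $n\to\infty$, the transformation $H_\l$ keeps $X_\v=\{q^{\v(n)}:n\in\N\}$ in its cloud only for $\l=1$. So the entire task reduces to checking that the function $\v(n)=n^2$ meets every standing hypothesis under which that theorem was stated.

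First I would record that $\v(n)=n^2$ is a monotonically increasing function of a positive integer argument, since $\v(n+1)-\v(n)=(n+1)^2-n^2=2n+1>0$ for all $n\ge1$, and that it satisfies the normalization $\v(n)\ge n$ imposed in~\eqref{eq:1}, because $n^2\ge n$ for every $n\in\N$. Hence $X_\v=\{q^{n^2}:n\in\N\}$ is a legitimate instance of the construction, lying in a well-defined cloud.

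Next I would verify the additional hypothesis of the theorem: $\v(n+1)-\v(n)=2n+1\to\infty$ as $n\to\infty$. With this in hand the theorem applies verbatim: for any real $q>1$ and $\l>0$, the similarity $H_\l$ keeps $X_\v=\{q^{n^2}:n\in\N\}$ in the same cloud if and only if $\l=1$, which is exactly the assertion of the corollary.

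There is essentially no obstacle here — the statement is a routine specialization — so I would keep the proof to the two lines above, simply noting the verification $2n+1\to\infty$ and citing the theorem. If anything warrants a word of care, it is only confirming that the boundedness/normalization conventions from~\eqref{eq:1} are respected by $\v(n)=n^2$, which I have done.
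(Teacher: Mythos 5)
Your proof is correct and is exactly the specialization the paper intends: the corollary is stated as an immediate consequence of the preceding theorem, and verifying that $\v(n)=n^2$ is increasing, satisfies $n^2\ge n$, and has $\v(n+1)-\v(n)=2n+1\to\infty$ is all that is needed.
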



\begin{thebibliography}{15}
\bibitem{Mendelson} Mendelson E. \emph{An Introduction to Mathematical Logic\/} (4th ed.), London: Chapman and Hall/CRC, 1997.
\bibitem{Edwards} Edwards D. \emph{The Structure of Superspace. In: Studies in Topology}, ed. by Stavrakas
N.M. and Allen K.R., New York, London, San Francisco, Academic Press, Inc., 1975.
\bibitem{Gromov1981} Gromov M. \emph{Structures m\'etriques pour les vari\'et\'es riemanniennes}, edited by Lafontaine and Pierre Pansu, 1981.
\bibitem{Gromov1999} Gromov M. \emph{Metric structures for Riemannian and non-Riemannian spaces}, Birkh\"auser (1999). ISBN 0-8176-3898-9 (translation with additional content).
\bibitem{BurBurIva} Burago D., Burago Yu., Ivanov S. \emph{A Course in Metric Geometry}, AMS GSM 33, 2001.
\bibitem{Herron} Herron D.A. \emph{Gromov–Hausdorff Distance for Pointed Metric Spaces}, J. Anal., 2016, v. 24, N 1, pp 1–38.
\bibitem{Borzov} Borzov S.I., Ivanov A.O., Tuzhilin A.A. \emph{Extendability of Metric Segments in Gromov-Hausdorff Distance}. 2020, ArXiv e-prints, arXiv:2009.00458.
\bibitem{Borisova} \url{http://dfgm.math.msu.su/files/0students/2021-dip-Borisova.pdf}
\bibitem{IvaNikolaevaTuz} Ivanov A.O., Nikolaeva N.K., Tuzhilin A.A. \emph{The Gromov-Hausdorff Metric on the Space of Compact Metric Spaces is Strictly Intrinsic}. ArXiv e-prints, arXiv:1504.03830, 2015.
\bibitem{Memoli} Chowdhury S., Memoli F. \emph{Constructing Geodesics on the Space of Compact Metric Spaces}. ArXiv e-prints, arXiv:1603.02385, 2016.
\bibitem{IvaIliadisTuz} Ivanov A.O., Iliadis S., Tuzhilin A.A. \emph{Realizations of Gromov-Hausdorff Distance}. ArXiv e-prints, arXiv:1603.08850, 2016.
\end{thebibliography}
\end{document}